\newtheorem{problem}{Problem}
\newtheorem{remark}{Remark}
\newtheorem{lemma}{Lemma}
\newtheorem{theorem}{Theorem}
\newtheorem{example}{Example}
\newcommand{\keywords}{\textbf{Key words.}  }
\title{{\scshape Residual-type a posteriori estimators\\ for a singularly perturbed reaction-diffusion\\ variational inequality\\ \vspace{-0.3em} -\\ \vspace{-0.3em} reliability, efficiency and robustness}}
\author{Mirjam Walloth\\*[3\baselineskip]
\small Fachbereich Mathematik, TU Darmstadt \\
\small  Dolivostra{\ss}e 15, 64293 Darmstadt, {\it walloth@mathematik.tu-darmstadt.de}}
\date{}
\begin{document}
\maketitle

\begin{abstract}
We derive a new residual-type a posteriori estimator for a singularly perturbed reaction-diffusion problem with obstacle constraints. 
It generalizes robust residual estimators for unconstrained singularly perturbed equations. Upper and lower bounds are derived with respect to an error notion which measures the error of the solution and of a suitable approximation of the constraining force in a $\epsilon$-dependent energy norm and its dual norm. While the robust upper bound is proven for non-discrete obstacle function, the local lower bounds are derived for discrete obstacle functions. For the proof of the local lower bounds we construct special bubble functions which cope with the structure of the approximation of the constraining force and the $\epsilon$-dependency. 
\end{abstract}

\keywords{ singularly perturbed reaction diffusion, obstacle problem, residual-type a posteriori estimator}

\section{Introduction}

In this work we present a residual-type a posteriori estimator for a singularly perturbed reaction-diffusion problem with obstacle constraints.
An example of this can be found in phase-field models for fracture propagation with irreversibility constraints, see e.g. \cite{Miehe_Hofacker_Welschinger_2010}.

A posteriori estimators are widely used in the numerical simulation to obtain informations about the approximation quality of the discrete solution as usually the exact solution is unknown. This information can be further used for mesh adaptation to improve the quality of the discrete solution for given computational resources.

It is important that the estimator is reliable and efficient, i.e. constitutes upper and lower bounds to the error at least up to so-called oscillation terms. Thus, the estimator is equivalent to the error which implies that the error is neither over- nor underestimated. A reliable and efficient estimator for linear elliptic problems, which is attractive in view of its simplicity, is the standard residual estimator \cite{Verfuerth_2013}. 
Regarding singularly perturbed equations which depend on a parameter $\epsilon<<1$ it is important that the estimator is robust, i.e. the constants in the upper and lower bounds are independent of $\epsilon$. Otherwise the equivalence relation between error and estimator is destroyed for $\epsilon\rightarrow 0$. For singularly perturbed reaction-diffusion equations a robust residual estimator has been presented in \cite{Verfuerth_1998b}.

Standard residual estimators yield an upper but no lower bounds for obstacle problems. In \cite{Veeser_2001} a first efficient and reliable residual-type a posteriori estimator has been derived. Therein the error was measured in both unknowns, the solution and a suitable approximation of the constraining forces. This idea has been adapted and improved for different obstacle and contact problems. e.g. \cite{Moon_Nochetto_Petersdorff_Zhang_2007, Krause_Veeser_Walloth_2015, Walloth_2018}.

As far as we know, we derive the first efficient and reliable residual-type a posteriori estimators for singularly perturbed reaction-diffusion problems with obstacle constraints. The estimator reduces to the robust residual estimator of  \cite{Verfuerth_1998b} if no contact occurs. 

In order to measure the error in both unknowns and to deal with the aspect of robustness we define an energy norm depending on $\epsilon$ for the error in the solution and a corresponding dual norm for the error of a suitable approximation of the constraining force, called quasi-discrete constraining force. The definition of this approximation reflecting the local structure is important for the efficiency as well as the localization of the estimator contributions. 
A key ingredient in the derivation of efficient and reliable residual-type a posteriori estimators for problems with constraints is the Galerkin functional which replaces the linear residual of elliptic unconstrained problems. The derivation of the robust upper bound consists of deriving an upper bound of the epsilon-dependent dual norm of the Galerkin functional and an upper bound of a duality pairing between the error in the solution and the constraining force. Besides estimator contributions known from \cite{Verfuerth_1998b} a complementarity residual is part of the estimator. 
For discrete gap functions the estimator contributions vanish in the so-called area of full-contact. Thus, the estimator perceives that adaptive refinement cannot improve the solution if it is fixed to the obstacle. 

We give local lower bounds of all estimator contributions in the case of discrete gap functions. Therefore we define a  linear combination of bubble functions with special properties. The definition has to cope with the structure of the quasi-discrete constraining force as these auxiliary functions relate the local estimator contributions to the Galerkin functional and thus to the error measure. 
In contrast to  \cite{ Krause_Veeser_Walloth_2015} where a similar ansatz has been used, the interior residual of a reaction-diffusion problem which is part of the estimator cannot be approximated by a constant interior residual plus oscillation terms. Further, to tackle the epsilon-dependency of the estimator and the error measure we use bubble functions on modified elements like in  \cite{Verfuerth_1998b}.
 
In the last section we provide numerical results. We analyze numerically the adaptively refined grids, the convergence of the error and the robustness of the estimator. 
Further, we show what happens if one uses the standard residual estimator not designed for a constrained problem and a residual-type estimator which is suited for the obstacle problem but not for the epsilon-dependency.

\section{The singularly perturbed reaction-diffusion \- variational inequality}

In this section we present the weak and the discrete problem formulations for the singularly perturbed reaction-diffusion problem with obstacle constraints. 

\subsection{Weak formulation}
The domain is denoted by $\Omega\subset\mathbb{R}^d$, $d=2,3$ and the boundary by $\Gamma=\partial\Omega$ which is subdivided in the Neumann boundary $\Gamma^N$ and the Dirichlet boundary $\Gamma^D$. 

The solution space of the weak formulation is the subset
\begin{equation*}
\mathcal{H}:= \{\psi\in H^1(\Omega)\mid \mathrm{tr}|_{\Gamma^D}(\psi) = \varphi^D \}
\end{equation*}
of $H^1(\Omega)$ where $\mathrm{tr}$ is the trace operator. For convenience in the discrete approximation of the Dirichlet values we assume $\varphi_D$ to be continuous and piecewise linear on $\Gamma^D$.
In the following we omit the special notation for the trace operator. The space of test functions is given by
$\mathcal{H}_0:= \{\psi \in H^1(\Omega)\mid \mathrm{tr}|_{\Gamma^D}(\varphi) = 0 \}$
and its dual is $\mathcal{H}^{\ast}$.
For the obstacle function $g\in H^{1}(\Omega)$ with $\varphi^D\leq g$ on $\Gamma_D$ we define the admissible set
\begin{equation}\label{ContinuousAdmissibleSet}
\mathcal{K}:=\{\psi\in \mathcal{H}\mid \psi\leq g\}.
\end{equation}
We assume the force density $f$ and the Neumann data $\pi$ to be $L^2$-functions on $\Omega$ or $\Gamma^N$, respectively.  The $L^2$-norm and its scalar product are denoted by $\|\cdot\|$ and $\left<\cdot,\cdot\right>$ without any subindex.  The duality pairing between $H^1$ and its dual $H^{-1}$ is given by $\left<\cdot,\cdot\right>_{-1,1}$ and the corresponding norms are $\|\cdot\|_1$ and $\|\cdot\|_{-1}$.  Later on, we need restrictions to subdomains which are indicated by a further subindex, e.g., $\|\cdot\|_{1,\omega}$ for $\omega\subset\Omega$.

Finally, we define the symmetric bilinear form 
\begin{align}\label{BilinearForm_a}
a_{\epsilon}(\zeta,\psi) := \left<\epsilon^2 \nabla \zeta,\nabla \psi \right> +\left<\zeta , \psi \right>\quad\forall\zeta,\psi\in H^1,
\end{align} 
with $\epsilon<<1$.
Thus, the weak formulation of the singularly perturbed reaction-diffusion problem is given by Problem \ref{Problem:WeakForm}.
\begin{problem}{Weak formulation}\label{Problem:WeakForm}

We seek a solution $\varphi\in\mathcal{K}$ such that
\begin{equation}\label{WeakContinuous}
a_{\epsilon}(\varphi,\varphi-\psi )\leq\left< f, \varphi-\psi\right> + \left<\pi,\varphi-\psi\right>_{\Gamma^N}\quad\forall \psi\in \mathcal{K}.
\end{equation}
\end{problem}
The unique solvability of Problem \ref{Problem:WeakForm} is given by the Theorem of Lions and Stampacchia.
It exists a distribution $\lambda\in\mathcal{H}^{\ast}$ which turns the variational inequality (\ref{WeakContinuous}) in an equation
\begin{equation*}
\left<\lambda ,\psi\right>_{-1,1}:=\left<f,\psi \right> + \left<\pi,\psi\right>_{\Gamma^N} -a_{\epsilon}(\varphi, \psi )\quad \forall \psi\in \mathcal{H}_0.
\end{equation*}
It is called Lagrange multiplier or constraining force density. Due to the variational inequality the constraining force fulfills the following sign condition 
\begin{equation}\label{SignConditionContinuous}
\left<\lambda,\varphi-\psi\right>_{-1,1}\ge 0.
\end{equation}

\subsection{Finite element formulation}

In the discrete setting we assume the domain $\Omega$ to be polygonal. The mesh $\mathfrak{M}$, resolving the domain, consists of simplicial elements $\mathfrak{e}\in\mathfrak{M}$ which are either disjoint or share a node $p$, an edge or a face $\mathfrak{s}$. The
polygonal boundary segments $\Gamma^D, \Gamma^N$ are resolved by the mesh, too, meaning that their boundaries
$\partial\Gamma^N,\partial\Gamma^D$ are either nodes or edges. The set of nodes $p$ is given by $\mathfrak{N}$ and we distinguish between the set $\mathfrak{N}^D$ of nodes on the Dirichlet boundary, the set $\mathfrak{N}^N$ of nodes
at the Neumann boundary and the set of interior nodes $\mathfrak{N}^I$. The mesh is taken from a shape-regular family, meaning that the ratio of the diameter of any element to the diameter of its inscribed circle is uniformly bounded. 

Further, we define a patch $\omega_p$ as the interior of the union of all elements sharing the node $p$. We call the union of all sides in the interior of $\omega_p$, not including the boundary of $\omega_p$ skeleton and denote it by $\gamma_p^I$. For Neumann boundary nodes we denote the intersections between $\Gamma$ and $\partial\omega_p$ by  $\gamma_p^N:=\Gamma^N\cap\partial\omega_p$.
Further, we will make use of $\omega_{\mathfrak{s}}$ which is the union of all elements sharing a side $\mathfrak{s}$.

The solution space of the weak formulation is the subset of the linear finite element space with incorporated Dirichlet values $\varphi^D$ denoted by
\begin{equation*}
\mathcal{H}_{\mathfrak{m}}:= \{\psi_{\mathfrak{m}} \in \mathcal{C}^0(\bar{\Omega})\mid\forall\mathfrak{e}\in\mathfrak{M}, \;\psi_{\mathfrak{m}}|_{\mathfrak{e}}\in \mathbb{P}_1(\mathfrak{e}) \text{ and }\psi_{\mathfrak{m}}=\varphi^D\;\mbox{on }\Gamma^D\}
\end{equation*}
and the space of test functions is given by
\begin{equation*}
\mathcal{H}_{\mathfrak{m},0}:= \{\psi_{\mathfrak{m}} \in \mathcal{C}^0(\bar{\Omega})\mid\forall\mathfrak{e}\in\mathfrak{M}, \;\psi_{\mathfrak{m}}|_{\mathfrak{e}}\in \mathbb{P}_1(\mathfrak{e}) \text{ and }\psi_{\mathfrak{m}}= 0\;\mbox{on }\Gamma^D\}.
\end{equation*}
The nodal basis functions of the finite element spaces are denoted by $\phi_p$. 

Let $g_{\mathfrak{m}}$ be a linear finite element approximation of the obstacle function $g$ with $g_{\mathfrak{m}}\geq \varphi^D$ on $\Gamma_D$, then the discrete admissible set is given by 
\begin{equation}\label{DiscreteAdmissibleSet_ptwise}
\mathcal{K}_{\mathfrak{m}}:=\{\psi_{\mathfrak{m}}\in \mathcal{H}_{\mathfrak{m}}\mid \psi_{\mathfrak{m}}\leq g_{\mathfrak{m}}\}.
\end{equation}
If $g=g_{\mathfrak{m}}$, $\mathcal{K}_{\mathfrak{m}}\subset\mathcal{K}$ holds. 

The discrete problem formulation is given in Problem \ref{Problem:DiscreteForm}.
\begin{problem}{Discrete formulation}\label{Problem:DiscreteForm}\\
Find $\varphi_{\mathfrak{m}}\in\mathcal{K}_{\mathfrak{m}}$ fulfilling the variational inequality
\begin{equation*}
a_{\epsilon}(\varphi_{\mathfrak{m}}, \varphi_{\mathfrak{m}}-\psi_{\mathfrak{m}}) \leq \left<f,\varphi_{\mathfrak{m}}-\psi_{\mathfrak{m}}\right> + \left<\pi,\varphi_{\mathfrak{m}}-\psi_{\mathfrak{m}}\right>_{\Gamma^N}\quad\forall\psi_{\mathfrak{m}}\in\mathcal{K}_{\mathfrak{m}}.
\end{equation*}
\end{problem}
The unique solvability of Problem \ref{Problem:DiscreteForm}  follows just as in the continuous case from the Theorem of Lions and Stampacchia. 

Proceeding as in the continuous case we can define the discrete constraining force density 
\begin{equation}\label{DiscreteConstrainingForce}
\left< \lambda_{\mathfrak{m}},\psi_{\mathfrak{m}} \right>_{-1,1} := \left<f,\psi_{\mathfrak{m}}\right>+\left<\pi,\psi_{\mathfrak{m}}\right>_{\Gamma^N}-a_{\epsilon}(\varphi_{\mathfrak{m}}, \psi_{\mathfrak{m}}) \quad\forall \psi_{\mathfrak{m}}\in\mathcal{H}_{\mathfrak{m},0}.
\end{equation}
Later on, when we use integration by parts, we need the definition of the jump term $[\nabla\psi_{\mathfrak{m}}]:= \nabla|_{\mathfrak{e}}\psi_{\mathfrak{m}}\cdot\boldsymbol{n}- \nabla|_{\tilde{\mathfrak{e}}}\psi_{\mathfrak{m}}\cdot\boldsymbol{n}$ where $\mathfrak{e}, \tilde{\mathfrak{e}}$ are neighbouring elements and $\boldsymbol{n}$ ist the unit outward normal on the common side of the two elements.

\section{Main results}\label{Sec:Preliminaries} 
This section is devoted to the formulation of the main results of this article while the proofs will be given in Sections \ref{Sec:Reliability} and \ref{Sec:Efficiency}.
After defining the quasi-discrete constraining force, the Galerkin functional and the error measure, we define the estimator contributions and formulate the Theorems of reliability and efficiency.

\subsection{Quasi-discrete constraining force}

The discrete constraining force $\lambda_{\mathfrak{m}}$ (\ref{DiscreteConstrainingForce}) equals the definition of the linear residual $\mathcal{R}_{\mathfrak{m}}^{lin}$ for linear elliptic equations. We recall that in the derivation of standard residual estimators for linear equations the residual plays an important role as it is equivalent to the error \cite{Verfuerth_2013}. In the case of variational inequalities this equivalence is disturbed as the linear residual is related to the discrete constraining force as well as to the error. Thus, an error estimator based on the linear residual would overestimate the error. 

In \cite{Veeser_2001} the first efficient and reliable residual-type a posteriori estimator for a variational inequality was proposed. Therein, the error was measured in both unknowns the solution $\varphi$ and the constraining force $\lambda$. 
In order to compare the continuous and discrete constraining forces, we cannot simply take $\lambda_{\mathfrak{m}}$ as by definition (\ref{DiscreteConstrainingForce}) it is a functional on the space of discrete functions and not a functional on $\mathcal{H}_0$. 
There is no unique definition how $\lambda_{\mathfrak{m}}$ acts on a function in $\mathcal{H}_0$ which is not in $\mathcal{H}_{\mathfrak{m},0}$. Thus, we have to define an approximation of $\lambda$ as a function on $\mathcal{H}_0$ on the basis of the properties of the discrete solution $\varphi_{\mathfrak{m}}$ and $\lambda_{\mathfrak{m}}$.
Later on we call this approximation quasi-discrete constraining force.
In \cite{Veeser_2001} such an approximation as a functional on $H^1$ has been proposed by means of lumping $\sum_{p\in\mathfrak{N}^C}s_p\phi_p$,
where $s_p = \frac{\left<\lambda_{\mathfrak{m}},\phi_p\right>_{-1,1}}{\int_{\omega_p}\phi_p}\ge 0$
are the node values of the lumped discrete constraining force. The sign condition follows from the discrete variational inequality. 
As the lumped discrete constraining force  is a discrete function a complementarity condition, i.e. $\sum_{p\in\mathfrak{N}^C}s_p\phi_p\cdot(\varphi_{\mathfrak{m},1}-g_{\mathfrak{m}})=0$, cannot be fulfilled in the so-called semi-contact zone which consists of elements having nodes which are in contact and nodes which are not in contact. It is only valid in so-called full-contact areas where $\varphi_{\mathfrak{m},1}=g_{\mathfrak{m}}$  and in non-actual-contact areas where $\varphi_{\mathfrak{m},1}<g_{\mathfrak{m}}$. 

Especially for the efficiency and the localization of a posteriori error estimation it is very advantageous, if such a quasi-discrete constraining force,  can be defined differently for the different areas of full- and semi-contact to reflect local properties. Such an approach has been first used for the derivation of an a posteriori error estimator in \cite{Fierro_Veeser_2003} and applied to obstacle and contact problems in \cite{Nochetto_Siebert_Veeser_2005, Moon_Nochetto_Petersdorff_Zhang_2007, Krause_Veeser_Walloth_2015, Walloth_2018}. 
Following this approach  we distinguish between full-contact nodes $p\in\mathfrak{N}^{fC}$ and semi-contact nodes $p\in\mathfrak{N}^{sC}$. Full-contact nodes are those nodes for which the solution is fixed to the obstacle $\varphi_{\mathfrak{m}}=g_{\mathfrak{m}}$ on $\omega_p$ and the sign condition is fulfilled $\left<\mathcal{R}^{lin}_{\mathfrak{m}},\varphi \right>_{-1,1,\omega_p}\ge0 $ $\forall \varphi\ge 0\in \mathcal{H}_0(\omega_p)$. The latter condition means that the solution is locally not improvable, see the explanation in \cite{Moon_Nochetto_Petersdorff_Zhang_2007}. Semi-contact nodes are those nodes for which $\varphi_{\mathfrak{m}}(p)=g_{\mathfrak{m}}(p)$ holds but not the conditions of full-contact. 
Based on this classification we define the quasi-discrete constraining force
\begin{align}\label{QuasiDiscreteConstrainingForce}
&\left< \tilde{\lambda}_{\mathfrak{m}},\psi \right>_{-1,1} := \sum_{p\in\mathfrak{N}^{sC}}\left< \tilde{\lambda}_{\mathfrak{m}}^p,\psi\phi_p \right>_{-1,1} + \sum_{p\in\mathfrak{N}^{fC}}\left< \tilde{\lambda}_{\mathfrak{m}}^p,\psi\phi_p \right>_{-1,1}\quad\forall\psi\in\mathcal{H}_0.
\end{align}
For semi-contact nodes
\begin{align*}
\left< \tilde{\lambda}_{\mathfrak{m}}^p,\psi\phi_p \right>_{-1,1}
&:= \left<\lambda_{\mathfrak{m}} ,\phi_p \right>_{-1,1}c_p(\psi)\\
& =\int_{\gamma_p^I}\epsilon^2[\nabla \varphi_{\mathfrak{m}}]^Ic_p(\psi)\phi_p  + \int_{\gamma_p^N}(\pi-\epsilon^2\nabla \varphi_{\mathfrak{m}}\cdot\boldsymbol{n})c_p(\psi)\phi_p\\
&\qquad + \int_{\omega_p} (f -  \varphi_{\mathfrak{m}})c_p(\psi)\phi_p 
\end{align*}
holds with $c_p(\psi)=0$ for $p\in\mathfrak{N}^D$ and $c_p(\psi)= \frac{\int_{\tilde{\omega}_p}\psi\phi_p}{\int_{\tilde{\omega}_p}\phi_p}$, otherwise, where $\tilde{\omega}_p$ is the patch around $p$ with respect to two uniform red-refinements. 
For full-contact nodes
\begin{align*}
\left< \tilde{\lambda}_{\mathfrak{m}}^p,\psi\phi_p \right>_{-1,1}
& :=  \left<\mathcal{R}^{lin}_{\mathfrak{m}}, \psi\phi_p\right>_{-1,1}\\
& :=\int_{\gamma_p^I}\epsilon^2[\nabla \varphi_{\mathfrak{m}}]\psi\phi_p  + \int_{\gamma_p^N}(\pi-\epsilon^2\nabla \varphi_{\mathfrak{m}}\cdot\boldsymbol{n})\psi\phi_p\\
&\qquad + \int_{\omega_p} (f  -  \varphi_{\mathfrak{m}})\psi\phi_p 
\end{align*}
holds. 

\subsection{Error measure and Galerkin functional}
Corresponding to the bilinear form $a_{\epsilon}(\cdot,\cdot)$ given in (\ref{BilinearForm_a}) we define the energy norm
\begin{equation}\label{EnergyNorm}
\|\cdot\|_{\epsilon}:= \left\{\epsilon^2\|\nabla \cdot\|^2 + \|\cdot\|^2 \right\}^{\frac{1}{2}},
\end{equation}
compare \cite{Verfuerth_1998b}. 
As we aim to measure the error in both unknowns $\varphi$ and $\lambda$ we define further the dual norm $\|\cdot\|_{\ast,\epsilon}:= \frac{\mathrm{sup}_{\psi\in H^1}\left<\cdot, \psi \right>_{-1,1}}{\|\psi\|_{\epsilon}}$.

The error measure we consider for the derivation of the estimator is given by
\begin{equation}\label{ErrorMeasure}
\|\varphi_{\mathfrak{m}}-\varphi\|_{\epsilon} + \|\lambda-\tilde{\lambda}_{\mathfrak{m}}\|_{\ast,\epsilon}.
\end{equation}
Accordingly, the linear residual which is used in the derivation of a posteriori estimators for linear elliptic equations is replaced by a so-called Galerkin functional which takes into account the errors in both unknowns
 \begin{align}\label{DefGalerkinFunctional_a}
 \left<G_{\mathfrak{m}},\psi\right>_{-1,1} &:= a_{\epsilon}(\varphi-\varphi_{\mathfrak{m}},\psi) + \left<\lambda-\tilde{\lambda}_{\mathfrak{m}},\psi\right>_{-1,1}\quad\forall\psi\in\mathcal{H}_0
 \end{align}
 and can be reformulated as follows
 \begin{align}\nonumber
  \left<G_{\mathfrak{m}},\psi\right>_{-1,1} &=\left<f,\psi\right> + \left<\pi, \psi\right> - a_{\epsilon}( \varphi_{\mathfrak{m}}, \psi) - \left< \tilde{\lambda}_{\mathfrak{m}},\psi \right>_{-1,1}\\ \nonumber
&= \sum_{p\in\mathfrak{N}\backslash \mathfrak{N}^{fC}} \int_{\gamma_p^I}\epsilon^2[\nabla \varphi_{\mathfrak{m}}](\psi-c_p(\psi))\phi_p\\ \nonumber
&\qquad + \int_{\gamma_p^N}(\pi-\epsilon^2\nabla \varphi_{\mathfrak{m}}\cdot\boldsymbol{n})(\psi-c_p(\psi))\phi_p\\  \label{DefGalerkinFunctional_b}
&\qquad +\int_{\omega_p} (f  -  \varphi_{\mathfrak{m}})(\psi-c_p(\psi))\phi_p.  
 \end{align}

The relation between the dual norm of the Galerkin functional $\|G_{\mathfrak{m}}\|_{\ast,\epsilon}$ and the error measure (\ref{ErrorMeasure}) follows from
\begin{eqnarray}\label{LowerBound0}
\|G_{\mathfrak{m}}\|_{\ast,\epsilon}\lesssim \|\varphi-\varphi_{\mathfrak{m}}\|_{\epsilon}+\|\lambda-\tilde{\lambda}_{\mathfrak{m}}\|_{\ast,\epsilon},
\end{eqnarray}
and
\begin{align}\label{UpperBound0}
\|\varphi-\varphi_{\mathfrak{m}}\|^2_{\epsilon}\leq \|G_{\mathfrak{m}}\|^2_{\ast,\epsilon} + 2\left< \tilde{\lambda}_{\mathfrak{m}}-\lambda,\varphi-\varphi_{\mathfrak{m}} \right>_{-1,1},
\end{align}
and
\begin{align}\label{UpperBound1}
\|\lambda-\tilde{\lambda}_{\mathfrak{m}}\|^2_{\ast,\epsilon}\leq 2\left(\|G_{\mathfrak{m}}\|^2_{\ast,\epsilon}+ \|\varphi-\varphi_{\mathfrak{m}}\|^2_{\epsilon}\right),
\end{align}
compare \cite[Lemma 3.4]{Veeser_2001}.

\subsection{Error estimator and main results}\label{Subsec:EstimatorMainRes}
The error estimator 
\begin{equation}\label{Def_Estimator}
\eta := \sum_{k=1}^7\eta_{k}
\end{equation}
for which we prove reliability and efficiency in Sections  \ref{Sec:Reliability} and \ref{Sec:Efficiency} consists of the following local contributions
\begin{align*}
\eta_{1}:=&\left(\sum_{p\in\mathfrak{N}\backslash\mathfrak{N}^{fC}}\eta^2_{1,p}\right)^{\frac{1}{2}}, & \eta_{1,p}:=&\mathrm{min}\{\frac{h_p}{\epsilon},1\}\|f  -  \varphi_{\mathfrak{m}}\|_{\omega_p}\\
\eta_{2}:=&\left(\sum_{p\in\mathfrak{N}\backslash\mathfrak{N}^{fC}}\eta^2_{2,p}\right)^{\frac{1}{2}}, &\eta_{2,p}:=& \mathrm{min}\{\frac{h_p}{\epsilon},1\}^{\frac{1}{2}}\epsilon^{-\frac{1}{2}}\|\epsilon^2[\nabla \varphi_{\mathfrak{m}}]\|_{\gamma_p^I}\\
\eta_{3}:=&\left(\sum_{p\in\mathfrak{N}\backslash\mathfrak{N}^{fC}}\eta^2_{3,p}\right)^{\frac{1}{2}}, &\eta_{3,p}:=&\mathrm{min}\{\frac{h_p}{\epsilon},1\}^{\frac{1}{2}}\epsilon^{-\frac{1}{2}}\|\pi-\epsilon^2\nabla \varphi_{\mathfrak{m}}\cdot\boldsymbol{n}\|_{\gamma_p^N}\\ \displaybreak
\eta_{4}:=&\left(\sum_{p\in\mathfrak{N}^{sC}}\eta^2_{4,p}\right)^{\frac{1}{2}}, &\eta_{4,p}:=&\left(s_p \int_{\tilde{\omega}_p}(g_{\mathfrak{m}}-\varphi_{\mathfrak{m}})\phi_p\right)^{\frac{1}{2}}\\
\eta_{5}:=&\left(\sum_{p\in\mathfrak{N}^{sC}}\eta^2_{5,p}\right)^{\frac{1}{2}}, &\eta_{5,p}:= & \left(s_p \int_{\tilde{\omega}_p}((g-g_{\mathfrak{m}})^{+})\phi_p\right)^{\frac{1}{2}}\\
\eta_{6}:=&\left(\sum_{p\in\mathfrak{N}^{fC}}\eta^2_{6,p}\right)^{\frac{1}{2}}, &\eta_{6,p}:= &\left(\left<\tilde{\lambda}^p_{\mathfrak{m}} , (g-g_{\mathfrak{m}})^{+}\phi_p \right>_{-1,1}\right)^{\frac{1}{2}}\\
\eta_{7}:=&\left(\sum_{p\in\mathfrak{N}^C}\eta^2_{7,p}\right)^{\frac{1}{2}}, &\eta_{7,p}:= &\| (\varphi_{\mathfrak{m}} -g)^+\phi_p\|_{\omega_p,\epsilon}
\end{align*}
where $s_p = \frac{\left<\lambda_{\mathfrak{m}},\phi_p\right>_{-1,1}}{\int_{\omega_p}\phi_p}$, $h_p:=\mathrm{diam}(\omega_p)$ and $\zeta^+:=\mathsf{max}\{\zeta,0\}$ denotes the positive part of a function. We emphasize that the estimator contributions related to the constraints only contribute to $\eta$ in the area where the constraints are active, actually only in the area of semi-contact if $g=g_{\mathfrak{m}}$. This property is called localization of estimator contributions. 
In the absence of any contact, we have $\eta_{k,p}=0$ for $k=4,\ldots,7$ such that $\eta$ equals the standard residual estimator 
\begin{equation}\label{StdResEst}
\eta^{std}:= \sum_{k=1}^3\eta_k,
\end{equation}
for singularly perturbed reaction-diffusion equations, see \cite{Verfuerth_1998b}.

In Section \ref{Sec:Reliability}  we prove that $\eta$ constitutes a robust upper bound where robust means that the constant in the bound does not depend on $\epsilon$ such that the validity of the estimator holds for arbitrary choices of $\epsilon$.
\begin{theorem}{\bf Reliability of the error estimator}\label{Theorem:UpperBound}\\
The error estimator $\eta$ provides a robust upper bound of the error measure (\ref{ErrorMeasure}):
\begin{equation*}
\|\varphi_{\mathfrak{m}}-\varphi\|_{\epsilon} + \|\lambda-\tilde{\lambda}_{\mathfrak{m}}\|_{\ast,\epsilon}\lesssim \eta.
\end{equation*}
\end{theorem}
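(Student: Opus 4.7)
The plan is to start from the elementary decomposition~\eqref{UpperBound0}--\eqref{UpperBound1}, which reduces the theorem to controlling two quantities by the sum of squared estimator contributions: the $\epsilon$-dependent dual norm $\|G_{\mathfrak{m}}\|_{\ast,\epsilon}$ of the Galerkin functional and the duality pairing $\langle \tilde{\lambda}_{\mathfrak{m}}-\lambda,\varphi-\varphi_{\mathfrak{m}}\rangle_{-1,1}$. Occurrences of $\|\varphi-\varphi_{\mathfrak{m}}\|_{\epsilon}$ and $\|\lambda-\tilde{\lambda}_{\mathfrak{m}}\|_{\ast,\epsilon}$ produced on the right-hand side will be absorbed at the end via Young's inequality, first into the $\|\cdot\|_{\epsilon}^2$ and then, through~\eqref{UpperBound1}, into the $\|\cdot\|_{\ast,\epsilon}^2$ part of the error measure.

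For $\|G_{\mathfrak{m}}\|_{\ast,\epsilon}$ I would exploit the representation~\eqref{DefGalerkinFunctional_b}, in which only nodes outside $\mathfrak{N}^{fC}$ appear. Testing with an arbitrary $\psi\in\mathcal{H}_0$, each of the three local pieces (interior residual $f-\varphi_{\mathfrak{m}}$, jump $\epsilon^2[\nabla\varphi_{\mathfrak{m}}]$ and Neumann residual $\pi-\epsilon^2\nabla\varphi_{\mathfrak{m}}\cdot\boldsymbol{n}$) is bounded by Cauchy--Schwarz against the $L^2$-norm of $(\psi-c_p(\psi))\phi_p$ on $\omega_p$, $\gamma_p^I$ or $\gamma_p^N$. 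The key analytic ingredient is the singularly-perturbed Cl\'ement-type bound
\begin{equation*}
\|(\psi-c_p(\psi))\phi_p\|_{\omega_p} \lesssim \min\{h_p/\epsilon,1\}\,\|\psi\|_{\epsilon,\tilde{\omega}_p}, \qquad \|(\psi-c_p(\psi))\phi_p\|_{\gamma_p} \lesssim \min\{h_p/\epsilon,1\}^{1/2}\epsilon^{-1/2}\|\psi\|_{\epsilon,\tilde{\omega}_p},
\end{equation*}
adapted from~\cite{Verfuerth_1998b} to the averaging operator $c_p$. Summing over nodes with the finite-overlap property of patches then yields $\|G_{\mathfrak{m}}\|_{\ast,\epsilon}\lesssim \eta_1+\eta_2+\eta_3$.

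For the cross term I would split into $\langle \tilde{\lambda}_{\mathfrak{m}},\varphi-\varphi_{\mathfrak{m}}\rangle_{-1,1}+\langle \lambda,\varphi_{\mathfrak{m}}-\varphi\rangle_{-1,1}$. In the first summand the nonnegativity of the weights $s_p$ (and of $\tilde{\lambda}_{\mathfrak{m}}^p$ on full-contact patches by the definition of $\mathfrak{N}^{fC}$) combined with the pointwise bound $\varphi - \varphi_{\mathfrak{m}} \leq (g-g_{\mathfrak{m}})^+ + (g_{\mathfrak{m}}-\varphi_{\mathfrak{m}})$, a direct consequence of $\varphi\leq g$ and $\varphi_{\mathfrak{m}}\leq g_{\mathfrak{m}}$, feeds nodewise into the definition~\eqref{QuasiDiscreteConstrainingForce} and produces exactly the squared contributions $\eta_{4,p}^2$, $\eta_{5,p}^2$ at semi-contact nodes and $\eta_{6,p}^2$ at full-contact nodes. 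In the second summand, the sign condition~\eqref{SignConditionContinuous} (which yields $\lambda\geq 0$ together with the complementarity $\langle\lambda,g-\varphi\rangle_{-1,1}=0$ obtained by testing with $\psi=g\in\mathcal{K}$) gives $\langle\lambda,\varphi_{\mathfrak{m}}-\varphi\rangle_{-1,1}\leq\langle\lambda,(\varphi_{\mathfrak{m}}-g)^+\rangle_{-1,1}$; writing $\lambda=\tilde{\lambda}_{\mathfrak{m}}+(\lambda-\tilde{\lambda}_{\mathfrak{m}})$ and localizing $(\varphi_{\mathfrak{m}}-g)^+$ by the partition of unity over $\mathfrak{N}^C$ produces $\eta_7^2$, while the remaining factor $\|\lambda-\tilde{\lambda}_{\mathfrak{m}}\|_{\ast,\epsilon}\,\eta_7$ is split by Young into $\eta_7^2$ and a small multiple of $\|\lambda-\tilde{\lambda}_{\mathfrak{m}}\|_{\ast,\epsilon}^2$ that is then absorbed on the left.

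The main obstacle is preserving robustness in $\epsilon$ simultaneously with the variational-inequality structure. Concretely, the Cl\'ement-type step is only $\epsilon$-robust because of the twin role of the patch-averages $c_p$ and the weights $\min\{h_p/\epsilon,1\}$ in~\cite{Verfuerth_1998b}, and the bookkeeping must ensure that these weights survive the splitting caused by the Galerkin-functional reformulation restricted to $\mathfrak{N}\setminus\mathfrak{N}^{fC}$. In the cross-term step the delicate contribution is $\eta_7$, which has no counterpart in the unconstrained theory and arises solely from the non-discrete obstacle situation $g_{\mathfrak{m}}\ne g$; organising the absorption steps so that all cross-term remainders collapse exactly onto the seven prescribed pieces, without introducing spurious $\epsilon^{-1}$ factors, is the technically most sensitive point of the argument.
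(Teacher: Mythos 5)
Your proposal is correct and follows essentially the same route as the paper: the abstract bound combining (\ref{UpperBound0})--(\ref{UpperBound1}), the bound $\|G_{\mathfrak{m}}\|_{\ast,\epsilon}\lesssim(\sum_{k=1}^3\eta_k^2)^{1/2}$ via the representation (\ref{DefGalerkinFunctional_b}) and the $\epsilon$-weighted $L^2$-approximation properties of $c_p$, and the complementarity term split with the pointwise bounds $\varphi-\varphi_{\mathfrak{m}}\le(g-g_{\mathfrak{m}})^++(g_{\mathfrak{m}}-\varphi_{\mathfrak{m}})$ and $\varphi_{\mathfrak{m}}-\min\{\varphi_{\mathfrak{m}},g\}=(\varphi_{\mathfrak{m}}-g)^+$ followed by Young absorption of $\|\lambda-\tilde\lambda_{\mathfrak{m}}\|_{\ast,\epsilon}$. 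The only cosmetic difference is that the paper packages your inequality $\langle\lambda,\varphi_{\mathfrak{m}}-\varphi\rangle_{-1,1}\le\langle\lambda,(\varphi_{\mathfrak{m}}-g)^+\rangle_{-1,1}$ through the admissible auxiliary function $\varphi^{\ast}_{\mathfrak{m}}=\min\{\varphi_{\mathfrak{m}},g\}\in\mathcal{K}$ rather than through $\lambda\ge0$ and complementarity (note $g$ itself need not lie in $\mathcal{K}$ since only $\varphi^D\le g$ is assumed on $\Gamma^D$).
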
 

In order to formulate the local lower bounds we denote by $\bar{f}$ and $\bar{\pi}$ the piecewise constant approximations of $f$ and $\pi$ and we abbreviate $\mathrm{osc}_p(f):=\mathrm{min}\{\frac{h_p}{\epsilon},1\} \|\bar{f}-f\|_{\omega_p}$ and $\mathrm{osc}_p(\pi):=  \mathrm{min}\{\frac{h_p}{\epsilon},1\}^{\frac{1}{2}}\epsilon^{-\frac{1}{2}}\|\bar{\pi}-\pi\|_{\gamma_p^N}$.
In Section \ref{Sec:Efficiency} we derive the local lower bounds which are summarized in the following Theorems
\begin{theorem}{\bf Local lower bounds by $\eta_{1,p},\eta_{2,p},\eta_{3,p}$}\label{Theorem:LowerBound}\\
The error estimator contributions $\eta_{k,p}$, $k=1,2,3$ constitute the following robust local lower bounds
\begin{equation*}
\eta_{k,p}\lesssim\|\varphi-\varphi_{\mathfrak{m}}\|_{\epsilon,\omega_p} + \|\lambda-\tilde{\lambda}_{\mathfrak{m}}\|_{\ast,\epsilon,\omega_p} + \mathrm{osc}_p(f) + \mathrm{osc}_p(\pi).
\end{equation*}
\end{theorem}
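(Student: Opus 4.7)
The plan is to adapt the Verfürth bubble-function technique used for unconstrained singularly perturbed reaction–diffusion problems in \cite{Verfuerth_1998b} to the obstacle setting, using the Galerkin functional $G_{\mathfrak{m}}$ as the substitute for the linear residual, exactly as done (in the non-singularly-perturbed case) in \cite{Krause_Veeser_Walloth_2015}. The strategy is to construct, for each $k\in\{1,2,3\}$ and each node $p\in\mathfrak{N}\setminus\mathfrak{N}^{fC}$, a localized test function $\psi_{k,p}\in\mathcal{H}_0$ such that, on the one hand, $\langle G_{\mathfrak{m}},\psi_{k,p}\rangle_{-1,1}$ reproduces $\eta_{k,p}^2$ up to oscillation terms and, on the other hand, $\|\psi_{k,p}\|_{\epsilon}$ scales like $\eta_{k,p}$ divided by the weight $\min\{h_p/\epsilon,1\}^{\alpha}\epsilon^{-\beta}$ appearing in its definition. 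Combined with $\langle G_{\mathfrak{m}},\psi_{k,p}\rangle_{-1,1}\leq \|G_{\mathfrak{m}}\|_{\ast,\epsilon,\omega_p}\|\psi_{k,p}\|_{\epsilon}$ and the localized version of (\ref{LowerBound0}), this produces the claimed lower bound.

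For the test functions, I would follow \cite{Verfuerth_1998b} and use modified bubble functions supported on a strip of width $\min\{h_p,\epsilon\}$ around an element (for $\eta_{1,p}$), around an interior face $\mathfrak{s}\subset\gamma_p^I$ (for $\eta_{2,p}$), or around a Neumann face $\mathfrak{s}\subset\gamma_p^N$ (for $\eta_{3,p}$). These modified bubbles $b$ have the key scaling $\epsilon\|\nabla b\|+\|b\|\sim\min\{h_p/\epsilon,1\}^{-1/2}\|b\|$, which is precisely the reciprocal of the weight in the estimator. The test functions are built as $\psi_{k,p}=b\cdot r_{k,p}$, where $r_{1,p}$ is (an approximation of) the interior residual $\bar f-\varphi_{\mathfrak{m}}$, $r_{2,p}$ is (an extension of) the jump $\epsilon^2[\nabla\varphi_{\mathfrak{m}}]$, and $r_{3,p}$ is (an extension of) the Neumann residual $\bar\pi-\epsilon^2\nabla\varphi_{\mathfrak{m}}\cdot\boldsymbol{n}$. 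Since $b$ vanishes at all vertices and is compactly supported, the averaged corrections $c_q(\psi_{k,p})$ appearing in the quasi-discrete constraining force either drop out (for non-contact $q$) or can be absorbed into the residual part; here the flexibility in the definition of $\tilde\lambda_{\mathfrak{m}}^q$ on semi-contact patches is essential, and one verifies that $\langle\tilde\lambda_{\mathfrak{m}},\psi_{k,p}\rangle_{-1,1}$ either vanishes or contributes a lower-order term.

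The main obstacle, and this is the difficulty the paper explicitly highlights in its introduction, is the interior-residual estimate $\eta_{1,p}$. In pure diffusion one replaces $f$ by the piecewise constant $\bar f$ up to oscillations $\mathrm{osc}_p(f)$ and then invokes an equivalence $\|\bar f\|_{\omega_p}^2\lesssim\int b\bar f^2$ that only uses that $\bar f$ is constant on elements. Here the residual $f-\varphi_{\mathfrak{m}}$ also contains the piecewise linear term $\varphi_{\mathfrak{m}}$, which cannot be absorbed by $\mathrm{osc}_p(f)$. I would therefore work with the polynomial quantity $\bar f-\varphi_{\mathfrak{m}}$ and invoke an inverse-type equivalence for piecewise linear functions against the modified bubble $b$, namely $\|\bar f-\varphi_{\mathfrak{m}}\|_{\omega_p}^2\lesssim\int_{\omega_p}(\bar f-\varphi_{\mathfrak{m}})^2 b$, with the shape-regularity constant depending only on the polynomial degree. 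One then controls $\|b(\bar f-\varphi_{\mathfrak{m}})\|_{\epsilon}$ by the modified-bubble scaling above; the price paid is a factor $\min\{h_p/\epsilon,1\}$, which is precisely what converts $\eta_{1,p}^2$ into the bound.

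Putting the three pieces together, for each $k$ we obtain $\eta_{k,p}^2\lesssim\langle G_{\mathfrak{m}},\psi_{k,p}\rangle_{-1,1}+\text{osc terms}\lesssim\|G_{\mathfrak{m}}\|_{\ast,\epsilon,\omega_p}\eta_{k,p}+\mathrm{osc}_p(f)\eta_{k,p}+\mathrm{osc}_p(\pi)\eta_{k,p}$ after exploiting the bubble scaling. Dividing by $\eta_{k,p}$ and applying the localized version of (\ref{LowerBound0}) yields $\eta_{k,p}\lesssim\|\varphi-\varphi_{\mathfrak{m}}\|_{\epsilon,\omega_p}+\|\lambda-\tilde\lambda_{\mathfrak{m}}\|_{\ast,\epsilon,\omega_p}+\mathrm{osc}_p(f)+\mathrm{osc}_p(\pi)$, with constants independent of $\epsilon$ thanks to the modified-bubble construction.
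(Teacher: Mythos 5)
Your overall strategy coincides with the paper's: test the Galerkin functional with residual-times-bubble functions, use $\epsilon$-squeezed Verf\"urth-type bubbles for the face terms, handle the piecewise \emph{linear} interior residual $\bar f-\varphi_{\mathfrak m}$ via a polynomial norm equivalence rather than a reduction to a constant residual, and close the argument with the localized version of (\ref{LowerBound0}). (A minor deviation: for $\eta_{1,p}$ the paper uses ordinary element bubbles, for which the weighted inverse inequality (\ref{WeightedInverseInequality}) already yields the factor $\mathrm{max}\{\epsilon/h_{\mathfrak e},1\}$; squeezing is only needed on the faces. This is immaterial.)

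The genuine gap is your treatment of the semi-contact contributions. In the representation (\ref{DefGalerkinFunctional_b}) the test function enters through $\psi-c_q(\psi)$, so testing $G_{\mathfrak m}$ with $\psi_{k,p}=b\,r_{k,p}$ leaves, for every semi-contact node $q$ in the patch, the extra term $\left<\lambda_{\mathfrak m},\phi_q\right>_{-1,1}c_q(\psi_{k,p})$. You claim this ``vanishes or contributes a lower-order term'' and can be ``absorbed into the residual part''. Neither holds for a single bubble times the residual: $c_q(\psi_{k,p})$ is a nonzero weighted average in general, and $\left<\lambda_{\mathfrak m},\phi_q\right>_{-1,1}c_q(\cdot)$ is of the same nature as $\eta_{4,q}^2$, a quantity for which the paper can only prove a separate (and, for $h_q>\epsilon$, non-robust) lower bound involving the additional term $\|\epsilon^2[\nabla g_{\mathfrak m}]\|_{\gamma_q^I}$; it is not controlled by $\|G_{\mathfrak m}\|_{\ast,\epsilon,\omega_p}$ plus oscillations. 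The paper's resolution --- the construction it advertises as its main technical novelty --- is to replace the single bubble by a linear combination $\theta=\sum_j a_j\Psi_j$ of bubbles on the sub-elements (resp.\ sub-faces) of a twice red-refined mesh, setting $a_j=0$ on every sub-element touching a semi-contact node, which forces $c_q(r_{k,p}\,\theta)=0$ exactly because $c_q$ averages over the refined patch $\tilde\omega_q$, and choosing the remaining coefficients to satisfy the moment conditions $\int_{\mathfrak e}\phi_q\phi_r=\sum_{p}\int_{\mathfrak e}\phi_q\phi_r\theta\phi_p$ (resp.\ $\int_{\mathfrak s}1=\sum_{p}\int_{\mathfrak s}\theta\phi_p$), which restores precisely the norm equivalence for the piecewise linear residual that you invoke. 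Without this (or an equivalent) construction the semi-contact terms do not disappear and your chain of estimates does not close.
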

\begin{theorem}{\bf Local lower bound by $\eta_{4,p}$}\label{Theorem:LowerBound2}\\
For nodes $p\in\mathfrak{N}^{sC}$  we have the robust local lower bound
\begin{equation}\label{LowerBound4_Robust}
\begin{split}
\eta_{4,p} \lesssim& \|\varphi-\varphi_{\mathfrak{m}}\|_{\epsilon,\omega_p} + \|\lambda-\tilde{\lambda}_{\mathfrak{m}}\|_{\ast,\epsilon,\omega_p} +\mathrm{osc}_p(f) + \mathrm{osc}_p(\pi)\\
&  + \mathrm{min}\{\frac{h_p}{\epsilon},1\}^{\frac{1}{2}}\epsilon^{-\frac{1}{2}}\|\epsilon^2[\nabla g_{\mathfrak{m}}]\|_{\gamma^I_p}.
\end{split}
\end{equation}
Otherwise, for nodes $p\in\mathfrak{N}^{sC}$ with $h_p> \epsilon$ we have the local lower bound
\begin{equation}\label{LowerBound4_NonRobust}
\begin{split}
\eta_{4,p} \leq & C(\epsilon^{-\frac{3}{2}}) \left(\|\varphi-\varphi_{\mathfrak{m}}\|_{\epsilon,\omega_p} + \|\lambda-\tilde{\lambda}_{\mathfrak{m}}\|_{\ast,\epsilon,\omega_p} +\mathrm{osc}_p(f) + \mathrm{osc}_p(\pi)\right .\\
&\left. + \epsilon^{-\frac{1}{2}} \|\epsilon^2[\nabla g_{\mathfrak{m}}]\|_{\gamma^I_p} \right).
\end{split}
\end{equation}
\end{theorem}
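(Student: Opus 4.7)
The plan is to build a special nonnegative test bubble $\beta_p\in\mathcal{H}_0$, use it to recover a positive multiple of $\eta_{4,p}^2$ on the left of the Galerkin identity, and bound the right-hand side by the claimed error terms. I would follow the auxiliary-function strategy of \cite{Krause_Veeser_Walloth_2015} used for the standard obstacle problem, adapting it to the singular perturbation by employing bubbles supported on modified, thin strips of width $\min\{h_p,\epsilon\}$ as in \cite{Verfuerth_1998b}. First, unwinding the definitions of $s_p$, $c_p$ and $\tilde{\lambda}_{\mathfrak{m}}^p$, together with $\int_{\tilde{\omega}_p}\phi_p=\int_{\omega_p}\phi_p$, one obtains the key identity $\eta_{4,p}^2 = \left<\lambda_{\mathfrak{m}},\phi_p\right>_{-1,1}\,c_p(g_{\mathfrak{m}}-\varphi_{\mathfrak{m}}) = \left<\tilde{\lambda}_{\mathfrak{m}}^p,(g_{\mathfrak{m}}-\varphi_{\mathfrak{m}})\phi_p\right>_{-1,1}$, which tells us exactly what to aim for.

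The bubble $\beta_p$ is to be built with three properties: (i) \emph{Localization}, $c_q(\beta_p)=0$ for every semi-contact node $q\neq p$ and $\mathrm{supp}\,\beta_p$ avoiding all full-contact nodes, so that $\left<\tilde{\lambda}_{\mathfrak{m}},\beta_p\right>_{-1,1}$ collapses to $\left<\lambda_{\mathfrak{m}},\phi_p\right>_{-1,1}c_p(\beta_p)$; (ii) \emph{Calibration}, $c_p(\beta_p)\gtrsim c_p(g_{\mathfrak{m}}-\varphi_{\mathfrak{m}})$, whence $\left<\tilde{\lambda}_{\mathfrak{m}},\beta_p\right>_{-1,1}\gtrsim\eta_{4,p}^2$; (iii) a \emph{robust energy bound} $\|\beta_p\|_{\epsilon,\omega_p}\lesssim\min\{h_p/\epsilon,1\}^{1/2}\epsilon^{1/2}\eta_{4,p}$. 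Because $g_{\mathfrak{m}}-\varphi_{\mathfrak{m}}$ vanishes at every contact node, the natural ansatz $\phi_p(g_{\mathfrak{m}}-\varphi_{\mathfrak{m}})$ partially enforces (i); the remaining $c_q$-orthogonalities would be installed by adding element-wise linear combinations of standard Verfurth bubbles on the semi-contact elements, while (iii) would be achieved by the strip reduction to width $\min\{h_p,\epsilon\}$. For the non-robust variant (\ref{LowerBound4_NonRobust}), one drops the strip reduction in the regime $h_p>\epsilon$; the resulting energy-norm penalty translates into the factor $\epsilon^{-3/2}$ multiplying all terms on the right-hand side.

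With $\beta_p$ in hand, the Galerkin identity $\left<\tilde{\lambda}_{\mathfrak{m}},\beta_p\right>_{-1,1}=a_\epsilon(\varphi-\varphi_{\mathfrak{m}},\beta_p)+\left<\lambda,\beta_p\right>_{-1,1}-\left<G_{\mathfrak{m}},\beta_p\right>_{-1,1}$, combined with property (i), puts $\left<\lambda_{\mathfrak{m}},\phi_p\right>_{-1,1}c_p(\beta_p)$ on the left and three controllable pieces on the right. The first is bounded by $\|\varphi-\varphi_{\mathfrak{m}}\|_{\epsilon,\omega_p}\|\beta_p\|_{\epsilon}$; the last by $(\|\varphi-\varphi_{\mathfrak{m}}\|_{\epsilon,\omega_p}+\|\lambda-\tilde{\lambda}_{\mathfrak{m}}\|_{\ast,\epsilon,\omega_p})\|\beta_p\|_{\epsilon}$ via the local version of (\ref{LowerBound0}); and $\left<\lambda,\beta_p\right>_{-1,1}$ is split as $\left<\lambda-\tilde{\lambda}_{\mathfrak{m}},\beta_p\right>_{-1,1}+\left<\tilde{\lambda}_{\mathfrak{m}},\beta_p\right>_{-1,1}$, the first piece being absorbed by the dual-norm error and the second by the sign condition together with the obstacle oscillation $(g-g_{\mathfrak{m}})^{+}$. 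Standard integration by parts in $a_\epsilon(\varphi_{\mathfrak{m}},\beta_p)$ then produces the data-oscillation terms $\mathrm{osc}_p(f)$, $\mathrm{osc}_p(\pi)$ and the interior jump $\epsilon^2[\nabla\varphi_{\mathfrak{m}}]$; rewriting the latter as $\epsilon^2[\nabla g_{\mathfrak{m}}]+\epsilon^2[\nabla(\varphi_{\mathfrak{m}}-g_{\mathfrak{m}})]$ and absorbing the second summand into the solution error by a local inverse estimate isolates the jump term of the discrete obstacle that appears explicitly in (\ref{LowerBound4_Robust}). Dividing by $\|\beta_p\|_{\epsilon}$ via (iii) closes the argument.

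The main obstacle will be the simultaneous enforcement of the strong localization (i) and the $\epsilon$-robust energy bound (iii): confining $\beta_p$ to a strip of width $\epsilon$ while still requiring $c_q$-orthogonality for every neighbouring semi-contact node and vanishing on every full-contact patch forces a delicate, case-dependent linear combination of modified bubbles whose exact normalization has to be checked by hand. A further technical difficulty, as emphasized in the introduction, is that the reaction integrand $f-\varphi_{\mathfrak{m}}$ cannot be reduced to a constant plus data oscillation, so $\beta_p$ must carry this varying weight consistently through the integration-by-parts step without spoiling the scaling of its energy norm.
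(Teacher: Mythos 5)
Your route is genuinely different from the paper's, and as outlined it does not close. The paper does \emph{not} prove the bound for $\eta_{4,p}$ by a bubble-function/Galerkin-functional argument at all. It factors $\eta_{4,p}^2=\left<\lambda_{\mathfrak{m}},\phi_p\right>_{-1,1}c_p(g_{\mathfrak{m}}-\varphi_{\mathfrak{m}})$ and bounds the two factors separately: $\left<\lambda_{\mathfrak{m}},\phi_p\right>_{-1,1}$ by Cauchy--Schwarz applied to its explicit residual representation (producing $\eta_{1,p},\eta_{2,p},\eta_{3,p}$ after inserting the appropriate weights), and $c_p(g_{\mathfrak{m}}-\varphi_{\mathfrak{m}})$ by a pointwise geometric argument: since $g_{\mathfrak{m}}-\varphi_{\mathfrak{m}}\ge0$ vanishes at the contact node $p$, its directional derivatives in $\pm\boldsymbol{\tau}$ are nonnegative there, and a Taylor expansion plus telescoping over the elements of $\omega_p$ yields $(g_{\mathfrak{m}}-\varphi_{\mathfrak{m}})(\hat q)\lesssim h_p\,h_p^{-(d-1)/2}\|[\nabla(g_{\mathfrak{m}}-\varphi_{\mathfrak{m}})]\|_{\gamma_p^I}$. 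The product of the two bounds, Young's inequality, the splitting $[\nabla(g_{\mathfrak{m}}-\varphi_{\mathfrak{m}})]=[\nabla g_{\mathfrak{m}}]-[\nabla\varphi_{\mathfrak{m}}]$ and Theorem \ref{Theorem:LowerBound} then give both (\ref{LowerBound4_Robust}) and (\ref{LowerBound4_NonRobust}), the two cases coming from evaluating the weights for $h_p\le\epsilon$ versus $h_p>\epsilon$.

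Your proposal has three concrete gaps. First, the calibration (ii) and the energy bound (iii) are incompatible: $\eta_{4,p}$ carries the factor $s_p^{1/2}$, which is decoupled from $g_{\mathfrak{m}}-\varphi_{\mathfrak{m}}$ and may be arbitrarily small, while any $\beta_p$ with $c_p(\beta_p)\gtrsim c_p(g_{\mathfrak{m}}-\varphi_{\mathfrak{m}})$ has $\|\beta_p\|_{\epsilon,\omega_p}$ bounded below by a quantity independent of $s_p$; hence $\|\beta_p\|_{\epsilon,\omega_p}\lesssim C\,\eta_{4,p}$ cannot be enforced and the final division step fails. Second, your treatment of $\left<\lambda,\beta_p\right>_{-1,1}$ is circular: splitting it as $\left<\lambda-\tilde{\lambda}_{\mathfrak{m}},\beta_p\right>_{-1,1}+\left<\tilde{\lambda}_{\mathfrak{m}},\beta_p\right>_{-1,1}$ reintroduces on the right exactly the term you placed on the left, and for $\beta_p\ge0$ the sign condition gives $\left<\lambda,\beta_p\right>_{-1,1}\ge0$, i.e.\ the wrong sign to discard it. Third, the term $\|\epsilon^2[\nabla g_{\mathfrak{m}}]\|_{\gamma_p^I}$ does not arise from integration by parts of $a_\epsilon(\varphi_{\mathfrak{m}},\beta_p)$; ``absorbing $\epsilon^2[\nabla(\varphi_{\mathfrak{m}}-g_{\mathfrak{m}})]$ into the solution error by a local inverse estimate'' is not a valid step, and without the sign/Taylor bound on the gap your argument contains no mechanism to control the size of $c_p(g_{\mathfrak{m}}-\varphi_{\mathfrak{m}})$, which is precisely where the obstacle jump genuinely enters.
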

\begin{remark}
We note that the additional term $\mathrm{min}\{\frac{h_p}{\epsilon},1\}^{\frac{1}{2}}\epsilon^{-\frac{1}{2}}\|\epsilon^2[\nabla g_{\mathfrak{m}}]\|_{\gamma^I_p}$ in Theorem \ref{Theorem:LowerBound2} only occurs for $p\in\mathfrak{N}^{sC}$.  
The local lower bound (\ref{LowerBound4_Robust}) for $h_p\leq \epsilon$ shows that the decay of $\eta_{4,p}$ is of the same order as the other estimator contributions. 
In the application we expect the semi-contact zone to be well resolved, especially with respect to $\epsilon$ such that $h_p\leq \epsilon$ after a finite number of adaptive refinement steps such that the local lower bound is robust everywhere. 
 \end{remark}
 We do not provide lower bounds in terms of the additional error estimator contributions $\eta_{k,p}$ for $k=5,6,7$, depending on data approximation but notice that they cannot be neglected in the upper bound because all the other estimator contributions might be zero, while the real problem is not resolved due to $g_{\mathfrak{m}}\neq g$, compare examples 4.1 and 4.2 in \cite{Moon_Nochetto_Petersdorff_Zhang_2007} and subsection 6.2 in \cite{Krause_Veeser_Walloth_2015}.\\

In Section \ref{Sec:NumRes} we present numerical experiments to show the benefits of the new estimator $\eta$. Amongst others we will compare it
to a variant $\eta^{nr}$ which can be easily derived without taking care of the aspect of robustness. 
Imagine a residual-type a posteriori estimator for Problem \ref{Problem:WeakForm} would be derived with respect to the $H^1$-norm of the error, not 
paying attention to the $\epsilon$-dependency. The derivation would basically follow along the lines of Section \ref{Sec:Reliability} and \ref{Sec:Efficiency}. The proofs would be simplified as there is no need to care about the $L^2$-approximation for an energy norm and the standard definitions of the bubble functions can be used. 
The estimator contributions $\eta_k$ change in the way that $\mathrm{min}\{\frac{h_p}{\epsilon},1\}$ is replaced by  $h_p$ and  $\mathrm{min}\{\frac{h_p}{\epsilon},1\}^{\frac{1}{2}}\epsilon^{-\frac{1}{2}}$ is replaced by $h_p^{\frac{1}{2}}$.

\section{Reliability of the error estimator}\label{Sec:Reliability}
Based on the combination of (\ref{UpperBound0}) and (\ref{UpperBound1})
\begin{align}\label{UpperBoundAbstract}
\|\varphi-\varphi_{\mathfrak{m}}\|^2_{\epsilon}+\|\lambda-\tilde{\lambda}_{\mathfrak{m}}\|^2_{\ast,\epsilon}\leq 5\|G_{\mathfrak{m}}\|^2_{\ast,\epsilon} + 6\left< \tilde{\lambda}_{\mathfrak{m}}-\lambda,\varphi-\varphi_{\mathfrak{m}} \right>_{-1,1}
\end{align}
we derive the reliability of the estimator by first deriving a computable upper bound of $\|G_{\mathfrak{m}}\|^2_{\ast,\epsilon}$ and second of $\left< \tilde{\lambda}_{\mathfrak{m}}-\lambda,\varphi-\varphi_{\mathfrak{m}} \right>_{-1,1}$. Thus, the proof of Theorem \ref{Theorem:UpperBound} will follow from Lemma \ref{UpperBoundGalerkin} and Lemma \ref{LemmaComplementarityResidual}.

\subsection{Upper bound of Galerkin functional}

In this subsection we give the proof of 
\begin{lemma}\label{UpperBoundGalerkin}
The Galerkin functional defined in (\ref{DefGalerkinFunctional_a}) satisfies 
\begin{equation*}
\|G_{\mathfrak{m}}\|_{\ast,\epsilon}\lesssim \left(\sum_{k=1}^3\eta^2_k\right)^{\frac{1}{2}}
\end{equation*}
\end{lemma}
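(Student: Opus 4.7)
The plan is to bound $\|G_{\mathfrak{m}}\|_{\ast,\epsilon}$ by duality. For arbitrary $\psi\in\mathcal{H}_0$ I would start from the representation (\ref{DefGalerkinFunctional_b}), in which every contribution is already of the form (data $\times$ $(\psi-c_p(\psi))\phi_p$) integrated over an element, interior skeleton piece, or Neumann boundary piece. Since $\phi_p\le 1$ pointwise, the Cauchy--Schwarz inequality applied term by term yields
\begin{equation*}
\langle G_{\mathfrak{m}},\psi\rangle_{-1,1}\;\lesssim\;\sum_{p\in\mathfrak{N}\setminus\mathfrak{N}^{fC}}\Bigl(\|f-\varphi_{\mathfrak{m}}\|_{\omega_p}\,\|\psi-c_p(\psi)\|_{\omega_p} + \|\epsilon^2[\nabla\varphi_{\mathfrak{m}}]\|_{\gamma_p^I}\,\|\psi-c_p(\psi)\|_{\gamma_p^I} + \|\pi-\epsilon^2\nabla\varphi_{\mathfrak{m}}\cdot\boldsymbol{n}\|_{\gamma_p^N}\,\|\psi-c_p(\psi)\|_{\gamma_p^N}\Bigr).
\end{equation*}
The data factors are exactly the scale-free parts of $\eta_{1,p},\eta_{2,p},\eta_{3,p}$, so the question reduces to proving that the quasi-interpolation $\psi\mapsto c_p(\psi)$ (a weighted average over the twice red-refined patch $\tilde{\omega}_p$) satisfies the $\epsilon$-weighted Clément-type bounds
\begin{equation*}
\|\psi-c_p(\psi)\|_{\omega_p}\;\lesssim\;\min\!\Bigl\{\tfrac{h_p}{\epsilon},1\Bigr\}\,\|\psi\|_{\epsilon,\tilde{\omega}_p},\qquad \|\psi-c_p(\psi)\|_{\gamma_p^I\cup\gamma_p^N}\;\lesssim\;\min\!\Bigl\{\tfrac{h_p}{\epsilon},1\Bigr\}^{\!1/2}\epsilon^{-1/2}\,\|\psi\|_{\epsilon,\tilde{\omega}_p}.
\end{equation*}

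The derivation of these two robust estimates is where I expect to spend most of the technical effort. The two regimes have to be treated separately: for $h_p\le\epsilon$ a Poincaré/Bramble--Hilbert argument on $\tilde{\omega}_p$ gives $\|\psi-c_p(\psi)\|_{\omega_p}\lesssim h_p\|\nabla\psi\|_{\tilde{\omega}_p}\le(h_p/\epsilon)\|\psi\|_{\epsilon,\tilde{\omega}_p}$, and a scaled trace inequality gives the $\gamma_p$-analog; for $h_p>\epsilon$ one must fall back to the raw $L^2$-stability $\|\psi-c_p(\psi)\|_{\omega_p}\lesssim\|\psi\|_{\tilde{\omega}_p}\le\|\psi\|_{\epsilon,\tilde{\omega}_p}$ and, on the skeleton, a trace inequality on an $\epsilon$-scaled sub-strip of the patch (the ``modified element'' idea borrowed from \cite{Verfuerth_1998b}) that yields the $\epsilon^{-1/2}$ factor without an $h_p$ blow-up. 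Keeping the choice of $c_p$ as a patchwise constant is crucial, because constants are annihilated by the jump term and the boundary differential operator, which is what permits $\psi$ to be replaced by $\psi-c_p(\psi)$ in the first place.

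Once these local bounds are in hand I would apply Cauchy--Schwarz over $p$, use shape-regularity to absorb the finite overlap of the patches $\tilde{\omega}_p$ into the implicit constant, and collect the three sums into exactly $\eta_1\|\psi\|_\epsilon$, $\eta_2\|\psi\|_\epsilon$ and $\eta_3\|\psi\|_\epsilon$. Dividing by $\|\psi\|_\epsilon$ and taking the supremum over $\psi\in\mathcal{H}_0$ then delivers the claim. A minor bookkeeping point is that the sums run only over $\mathfrak{N}\setminus\mathfrak{N}^{fC}$, which matches the index set in the definitions of $\eta_{1,p},\eta_{2,p},\eta_{3,p}$; no contribution from full-contact patches enters, consistent with the cancellation built into (\ref{DefGalerkinFunctional_b}).
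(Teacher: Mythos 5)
Your proposal is correct and follows essentially the same route as the paper: it starts from the representation (\ref{DefGalerkinFunctional_b}), applies Cauchy--Schwarz termwise, and reduces everything to the two $\epsilon$-robust Cl\'ement-type bounds that the paper isolates as Lemma \ref{LemmaL2Approx}, proving the element bound by Poincar\'e plus plain $L^2$-stability (whence the $\mathrm{min}$) and the side bound by a trace inequality on an $\epsilon$-thin strip \`a la \cite{Verfuerth_1998b}, which is equivalent to the multiplicative trace estimate of Lemma \ref{LemmaBdEstimate} used in the paper. The only differences are cosmetic (you drop the factor $\phi_p$ via $0\le\phi_p\le 1$ where the paper keeps it, and you split the cases $h_p\lessgtr\epsilon$ explicitly where the paper's multiplicative trace inequality handles both at once).
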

We will make use of the following results on the reference elements $\hat{\mathfrak{e}}$.
\begin{lemma}\label{LemmaBdEstimate}
Let $\hat{v}\in H^1(\hat{\mathfrak{e}})$ vanish on $\hat{\mathfrak{s}}_p$. Then the estimate 
\begin{equation*}
\|\hat{v}\|_{\hat{\mathfrak{s}}}\lesssim \|\hat{v}\|^{\frac{1}{2}}_{\hat{\mathfrak{e}}}\|\nabla \hat{v}\|^{\frac{1}{2}}_{\hat{\mathfrak{e}}}
\end{equation*}
holds for all other sides $\hat{\mathfrak{s}}\cap\hat{\mathfrak{e}}\neq \emptyset$. 
\end{lemma}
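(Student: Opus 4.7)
The plan is to combine a standard trace inequality on the reference simplex with a Friedrichs-type estimate that exploits the vanishing of $\hat v$ on $\hat{\mathfrak{s}}_p$. Since $\hat{\mathfrak e}$ is fixed (a reference simplex), all constants are absolute numerical constants and may be hidden in $\lesssim$.

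First, I would establish the general (non-sharpened) trace inequality
\begin{equation*}
\|\hat v\|_{\hat{\mathfrak s}}^2 \lesssim \|\hat v\|_{\hat{\mathfrak e}}^2 + \|\hat v\|_{\hat{\mathfrak e}}\,\|\nabla\hat v\|_{\hat{\mathfrak e}}
\end{equation*}
for any $\hat v\in H^1(\hat{\mathfrak e})$ and any side $\hat{\mathfrak s}$. The standard proof is a divergence-theorem argument: choose a smooth vector field $\mathbf{b}$ on $\hat{\mathfrak e}$ with $\mathbf{b}\cdot\mathbf{n}\geq c_0>0$ on $\hat{\mathfrak s}$ and $\mathbf{b}\cdot\mathbf{n}\leq C_1$ on the remaining sides (for a simplex one may take $\mathbf{b}(x)=x-x_\star$ with $x_\star$ the vertex opposite to $\hat{\mathfrak s}$). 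Then
\begin{equation*}
c_0 \int_{\hat{\mathfrak s}}\hat v^2 \leq \int_{\partial\hat{\mathfrak e}}\hat v^2\,\mathbf{b}\cdot\mathbf{n} = \int_{\hat{\mathfrak e}}\bigl(2\hat v\,\mathbf{b}\cdot\nabla\hat v + \hat v^2\,\nabla\!\cdot\!\mathbf{b}\bigr),
\end{equation*}
and Cauchy--Schwarz finishes the claim.

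Second, I would invoke Friedrichs' inequality on the reference element: because $\hat v$ vanishes on the full side $\hat{\mathfrak s}_p$, which has positive $(d-1)$-measure in $\partial\hat{\mathfrak e}$, one has
\begin{equation*}
\|\hat v\|_{\hat{\mathfrak e}} \lesssim \|\nabla\hat v\|_{\hat{\mathfrak e}}.
\end{equation*}
Substituting this into the first term of the preliminary trace bound gives $\|\hat v\|_{\hat{\mathfrak e}}^2 \lesssim \|\hat v\|_{\hat{\mathfrak e}}\|\nabla\hat v\|_{\hat{\mathfrak e}}$, so both contributions merge into $\|\hat v\|_{\hat{\mathfrak e}}\|\nabla\hat v\|_{\hat{\mathfrak e}}$, and taking square roots yields the asserted inequality.

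I do not expect a serious obstacle here: the only subtle point is verifying that the chosen vector field $\mathbf{b}$ indeed satisfies $\mathbf{b}\cdot\mathbf{n}\geq c_0>0$ on the chosen side $\hat{\mathfrak s}$; for a reference simplex this is guaranteed by taking $x_\star$ to be the vertex opposite to $\hat{\mathfrak s}$. Alternatively, one could avoid the divergence computation altogether and argue by fundamental theorem of calculus along rays emanating from $\hat{\mathfrak s}_p$: for each $y\in\hat{\mathfrak s}$, parametrize a segment from $\hat{\mathfrak s}_p$ to $y$, write $\hat v(y)$ as an integral of a directional derivative, and integrate over $\hat{\mathfrak s}$; this route leads to the same bound directly, without the intermediate $\|\hat v\|_{\hat{\mathfrak e}}^2$ term, but requires a bit more bookkeeping on the geometry of the reference simplex.
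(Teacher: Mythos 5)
Your argument is correct and complete: the multiplicative trace inequality obtained from the divergence theorem with $\mathbf{b}(x)=x-x_\star$, combined with Friedrichs' inequality using the vanishing of $\hat v$ on $\hat{\mathfrak{s}}_p$, yields exactly the stated bound. The paper itself gives no proof but refers to Lemma 3.2 of Verf\"urth (1998), whose argument is of the same standard type (your ray-integration alternative is essentially that route), so your proposal matches the intended proof.
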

The proof follows along the lines of \cite[Lemma 3.2]{Verfuerth_1998b}.

\begin{lemma}[$L^2$-approximation with respect to energy norm]\label{LemmaL2Approx}
Let  $c_p(\psi)=\frac{\int_{\tilde{\omega}_p}\psi\phi_p}{\int_{\tilde{\omega}_p}\phi_p}$
with $\tilde{\omega}_p\subset\omega_p$ for all $\mathfrak{N}\backslash\mathfrak{N}^D$ and $c_p(\psi)=0$ for $p\in\mathfrak{N}^D$. Then the $L^2$-approximation properties with respect to the energy norm (\ref{EnergyNorm}) hold
\begin{align}\label{L2ApproxEnergyElement}
 \|(\psi-c_p(\psi))\phi_p\|_{\omega_p}&\lesssim \mathrm{min}\{\frac{h_p}{\epsilon},1\}\|\psi\|_{\epsilon,\omega_p}\\ \label{L2ApproxEnergySide}
 \|(\psi-c_p(\psi))\phi_p\|_{\mathfrak{s}}&\lesssim\mathrm{min}\{\frac{h_p}{\epsilon},1\}^{\frac{1}{2}}\epsilon^{-\frac{1}{2}}\|\psi\|_{\epsilon,\omega_{\mathfrak{s}}}.
 \end{align}
\end{lemma}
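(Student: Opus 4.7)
The plan is to split the argument along the two regimes buried in $\min\{h_p/\epsilon,1\}$: the diffusion-dominated regime $h_p\le\epsilon$ and the reaction-dominated regime $h_p>\epsilon$. In each regime the right-hand side reduces to a concrete quantity, which I handle with two standard tools: a Cl\'ement/Poincar\'e-type inequality for $\psi-c_p(\psi)$, and the scaled trace inequality of Lemma \ref{LemmaBdEstimate}.

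For the element estimate (\ref{L2ApproxEnergyElement}), I would first use $0\le\phi_p\le1$ to reduce to bounding $\|\psi-c_p(\psi)\|_{\omega_p}$. In the case $h_p\le\epsilon$, a Cl\'ement-type Poincar\'e estimate on the patch gives $\|\psi-c_p(\psi)\|_{\omega_p}\lesssim h_p\|\nabla\psi\|_{\omega_p}$ for $p\in\mathfrak{N}\setminus\mathfrak{N}^D$ (and the analogous Friedrichs estimate, using $\psi|_{\Gamma^D}=0$, for $p\in\mathfrak{N}^D$, where $c_p(\psi)=0$). Writing $h_p\|\nabla\psi\|_{\omega_p}=(h_p/\epsilon)\,\epsilon\|\nabla\psi\|_{\omega_p}\le(h_p/\epsilon)\|\psi\|_{\epsilon,\omega_p}$ closes this case. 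In the case $h_p>\epsilon$ the prefactor equals $1$, so it suffices to control $\|(\psi-c_p(\psi))\phi_p\|_{\omega_p}$ by $\|\psi\|_{\omega_p}\le\|\psi\|_{\epsilon,\omega_p}$; a Cauchy--Schwarz bound $|c_p(\psi)|\lesssim|\tilde\omega_p|^{-1/2}\|\psi\|_{\tilde\omega_p}$, combined with shape regularity of $\tilde\omega_p\subset\omega_p$, yields exactly that.

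For the side estimate (\ref{L2ApproxEnergySide}), I would apply Lemma \ref{LemmaBdEstimate} to $v:=(\psi-c_p(\psi))\phi_p$ on a reference element containing the side $\mathfrak{s}$; since $\phi_p$ vanishes on the side $\hat{\mathfrak{s}}_p$ opposite to $p$, the hypothesis is met. Pulling back by the usual scaling $x=F\hat x$ with $|F|\sim h$ gives
\begin{equation*}
\|v\|_{\mathfrak{s}}^{2}\;\lesssim\;\|v\|_{\mathfrak{e}}\,\|\nabla v\|_{\mathfrak{e}}.
\end{equation*}
The first factor is controlled by (\ref{L2ApproxEnergyElement}) restricted to $\mathfrak{e}$. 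For the second I expand $\nabla v=\phi_p\nabla\psi+(\psi-c_p(\psi))\nabla\phi_p$, use $|\nabla\phi_p|\lesssim h_p^{-1}$, and bound $h_p^{-1}\|\psi-c_p(\psi)\|_{\mathfrak{e}}$ by $\|\nabla\psi\|_{\mathfrak{e}}$ when $h_p\le\epsilon$ (Poincar\'e) and by $h_p^{-1}\|\psi\|_{\mathfrak{e}}$ when $h_p>\epsilon$; in either regime $\|\nabla v\|_{\mathfrak{e}}\lesssim\epsilon^{-1}\|\psi\|_{\epsilon,\mathfrak{e}}$, because $\max\{h_p^{-1},\epsilon^{-1}\}=\epsilon^{-1}$ when $h_p>\epsilon$. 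Multiplying the two factors and summing over the (at most two) elements sharing $\mathfrak{s}$ delivers $\min\{h_p/\epsilon,1\}^{1/2}\epsilon^{-1/2}\|\psi\|_{\epsilon,\omega_{\mathfrak{s}}}$.

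The main obstacle I anticipate is making sure all constants are genuinely $\epsilon$-independent, which requires handling the two regimes separately rather than trying to interpolate. A secondary subtlety is the Poincar\'e estimate for the weighted mean $c_p(\psi)$ on the refined sub-patch $\tilde\omega_p$: one has to argue that $\int_{\tilde\omega_p}\phi_p$ is comparable to $|\tilde\omega_p|$ (since $\phi_p$ is bounded below on a patch obtained from two uniform red-refinements), so that the weighted average behaves like an unweighted one up to shape-regularity constants, and then deduce $\|\psi-c_p(\psi)\|_{\omega_p}\lesssim h_p\|\nabla\psi\|_{\omega_p}$ from the unweighted Poincar\'e estimate on $\omega_p$.
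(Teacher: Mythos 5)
Your proposal is correct and follows essentially the same route as the paper: the same weighted-mean Poincar\'e estimate on the patch (plus Friedrichs for Dirichlet nodes), the same trace estimate of Lemma \ref{LemmaBdEstimate} combined with the product rule for the side bound; your explicit case split $h_p\le\epsilon$ versus $h_p>\epsilon$ is just a reorganization of the paper's device of proving both the $h_p\|\nabla\psi\|_{\omega_p}$ and the $\|\psi\|_{\omega_p}$ bounds and taking their minimum.
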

\begin{proof}
To get the $L^2$-approximation property for $c_p(\psi)=\frac{\int_{\tilde{\omega}_p}\psi\phi_p}{\int_{\tilde{\omega}_p}\phi_p}$
with $\tilde{\omega}_p\subset\omega_p$ set $\zeta := \psi-c_p(\psi)$ and define the constant $<c>:= \frac{\int_{\omega_p}\zeta}{\int_{\omega_p}1}$. 
Thus, 
\begin{align*}
\int_{\tilde{\omega}_p}\zeta\phi_p = \int_{\tilde{\omega}_p}\psi\phi_p - \frac{\int_{\tilde{\omega}_p} \psi\phi_p}{\int_{\tilde{\omega}_p}\phi_p}\int_{\tilde{\omega}_p}\phi_p = 0
\end{align*}
and by adding and subtracting $<c>$
\begin{align*}
\zeta = \zeta - \int_{\tilde{\omega}_p}\zeta\phi_p = \zeta -<c> - \frac{1}{\int_{\tilde{\omega}_p}\phi_p}\int_{\tilde{\omega}_p}(\zeta -<c>)\phi_p.
\end{align*}
Now, we take the $L^2$-norm, apply the triangle inequality, the Poincar\'e inequality with mean value zero, the Cauchy-Schwarz inequality and use the fact that $\tilde{\omega}_p$ is the patch around $p$ with respect to a twice uniformly red-refined mesh, such that the diameter is a fixed portion of $h_p= \mathrm{diam}(\omega_p)$.
\begin{align}\nonumber
\|\psi-c_p(\psi)\|_{\omega_p} &\leq \|\zeta\|_{\omega_p}\leq\|\zeta-<c>\|_{\omega_p} + \|\frac{1}{\int_{\tilde{\omega}_p}\phi_p}\int_{\tilde{\omega}_p}(\zeta -<c>)\phi_p\|_{\omega_p}\\ \nonumber
&\lesssim h_p\|\nabla \zeta\|_{\omega_p} + \|\zeta -<c>\|_{\omega_p}\\ \label{L2ApproxH1}
&\lesssim h_p \|\nabla\psi\|_{\omega_p}. 
\end{align}
 For $c_p(\psi)=0$ the $L^2$-approximation (\ref{L2ApproxH1}) follows directly from the Poincar\'e inequality $\|\psi\|_{\omega_p}\lesssim h_p\|\nabla \psi\|_{\omega_p}$.
Together with 
$ \|(\psi-c_p(\psi))\phi_p\|_{\omega_p}\lesssim \|\psi\|_{\omega_p}$ we deduce the $L^2$-approximation property with respect to the energy norm (\ref{L2ApproxEnergyElement}).

It remains to derive the $L^2$- approximation property for sides $\mathfrak{s}$. Therefore we fix a node $p$ and denote the sides which are opposite to $p$ in $\mathfrak{e}$ by $\mathfrak{s}_p$. We note that $v:=(\psi-c_p(\psi))\phi_p|_{\mathfrak{s}_p}=0$. Let $F_{\mathfrak{e}}$ be the transformation $F_{\mathfrak{e}}: \mathfrak{e}\rightarrow \hat{\mathfrak{e}}$ from the element $\mathfrak{e}$ on the reference element $\hat{\mathfrak{e}}$. Thus $\hat{v}:=v\circ F^{-1}_{\mathfrak{e}}|_{\hat{\mathfrak{s}}_p}=0$, too. 
 
We apply the transformation rule to $\|v\|$ and $\|\nabla v\|$ and the result of Lemma \ref{LemmaBdEstimate} to get
$$\|(\psi-c_p(\psi))\phi_p\|_{\mathfrak{s}}\lesssim \|(\psi-c_p(\psi))\phi_p\|^{\frac{1}{2}}_{\mathfrak{e}}\|\nabla ((\psi-c_p(\psi))\phi_p)\|^{\frac{1}{2}}_{\mathfrak{e}}.$$
Next we apply the product rule and triangle inequality 
\begin{align*}
\|\nabla((\psi-c_p(\psi))\phi_p)\|_{\mathfrak{e}}&\leq\|\nabla(\psi-c_p(\psi))\phi_p\|_{\mathfrak{e}} + \|(\psi-c_p(\psi))\nabla\phi_p\|_{\mathfrak{e}}\\
&\lesssim \|\nabla(\psi-c_p(\psi))\|_{\mathfrak{e}} + h_{\mathfrak{e}}^{-\frac{1}{2}}\|(\psi-c_p(\psi))\|_{\mathfrak{e}}.
\end{align*} 
Thus, we get together with the $L^2$-approximation property (\ref{L2ApproxEnergyElement}) on the elements and $\|\nabla\psi\|_{\omega_\mathfrak{s}}\leq\epsilon^{-1}\|\psi\|_{\epsilon,\omega_{\mathfrak{s}}}$ the $L^2$-approximation property on the sides
\begin{align}\nonumber
\|(\psi-c_p(\psi))\phi_p\|_{\mathfrak{s}}&\leq\|(\psi-c_p(\psi))\phi_p\|_{\omega_{\mathfrak{s}}} ^{\frac{1}{2}}\|\nabla((\psi-c_p(\psi))\phi_p)\|^{\frac{1}{2}}_{\omega_{\mathfrak{s}}} \\ \nonumber
&\lesssim h_{\mathfrak{e}}^{-\frac{1}{2}}\|(\psi-c_p(\psi))\|_{\omega_{\mathfrak{s}}} + \|(\psi-c_p(\psi))\|^{\frac{1}{2}}_{\omega_{\mathfrak{s}}}\|\nabla(\psi-c_p(\psi)) \|^{\frac{1}{2}}_{\omega_{\mathfrak{s}}}\\ \nonumber
&\lesssim h_{\mathfrak{e}}^{-\frac{1}{2}}\mathrm{min}\{\frac{h_{\mathfrak{e}}}{\epsilon},1\}\|\psi\|_{\epsilon,\omega_{\mathfrak{s}}} + \mathrm{min}\{\frac{h_{\mathfrak{e}}}{\epsilon},1\}^{\frac{1}{2}}\|\psi\|^{\frac{1}{2}}_{\epsilon,\omega_{\mathfrak{s}}}\epsilon^{-\frac{1}{2}}\|\psi\|^{\frac{1}{2}}_{\epsilon,\omega_{\mathfrak{s}}}\\ \nonumber 
&\lesssim\mathrm{min}\{\frac{h_{\mathfrak{e}}}{\epsilon},1\}^{\frac{1}{2}}\epsilon^{-\frac{1}{2}}\|\psi\|_{\epsilon,\omega_{\mathfrak{s}}}
\end{align}
\end{proof}
Together with this preliminary results we can give the proof of Lemma \ref{UpperBoundGalerkin}.
\begin{proof}[Proof of Lemma \ref{UpperBoundGalerkin}] 
In order to derive an upper bound of the dual norm of the Galerkin functional, we use the representation (\ref{DefGalerkinFunctional_b}) and Cauchy-Schwarz inequality 
 \begin{align}\nonumber
 \left<G_{\mathfrak{m}}, \psi\right>
&\leq
\sum_{p\in\mathfrak{N}\backslash \mathfrak{N}^{fC}} \|\epsilon^2[\nabla \varphi_{\mathfrak{m}}]\|_{\gamma_p^I}\|(\psi-c_p(\psi))\phi_p\|_{\gamma_p^I} \\
\nonumber
&\qquad +\|\pi-\epsilon^2\nabla \varphi_{\mathfrak{m}}\cdot\boldsymbol{n}\|_{\gamma_p^N}\|(\psi-c_p(\psi))\phi_p\|_{\gamma_p^N}\\ \label{BoundGalerkin}
&\qquad +\|f  -  \varphi_{\mathfrak{m}}\|_{\omega_p}\|(\psi-c_p(\psi))\phi_p\|_{\omega_p}.
 \end{align}
 
 Combining (\ref{BoundGalerkin}) and (\ref{L2ApproxEnergyElement}, \ref{L2ApproxEnergySide}) we get the bound of the dual norm of the Galerkin functional 
\begin{align*}
\left<G_{\mathfrak{m}}, \psi \right>_{-1,1}&\lesssim \left(\sum_{p\in\mathfrak{N}\backslash \mathfrak{N}^{fC}}\left( \mathrm{min}\{\frac{h_{\mathfrak{e}}}{\epsilon},1\}^{\frac{1}{2}}\epsilon^{-\frac{1}{2}}\|\epsilon^2[\nabla \varphi_{\mathfrak{m}}]\|_{\gamma_p^I} \right.\right. \\
\nonumber
&\qquad \left.\left.+\mathrm{min}\{\frac{h_{\mathfrak{e}}}{\epsilon},1\}^{\frac{1}{2}}\epsilon^{-\frac{1}{2}}\|\pi-\epsilon^2\nabla \varphi_{\mathfrak{m}}\cdot\boldsymbol{n}\|_{\gamma_p^N}\right.\right.\\ 
&\qquad \left.\left.+\mathrm{min}\{\frac{h_{\mathfrak{e}}}{\epsilon},1\}\|f  -  \varphi_{\mathfrak{m}}\|_{\omega_p}\right)^2\right)^{\frac{1}{2}}\left(\sum_{p\in\mathfrak{N}}\|\psi\|^2_{\epsilon,\omega_p}\right)^{\frac{1}{2}}
\end{align*}
and thus 
\begin{align}\nonumber
\|G_{\mathfrak{m}}\|_{\ast,\epsilon}&= \frac{\mathrm{sup}_{\psi\in H^1}\left<G_{\mathfrak{m}}, \psi \right>_{-1,1}}{\|\psi\|_{\epsilon}}\lesssim \sum_{k=1}^3\eta_{k}.
\end{align}
\end{proof}

\subsection{Complementarity residual}
In this subsection we give the proof of 
\begin{lemma}[Complementarity residual]\label{LemmaComplementarityResidual}
Assume $\mathcal{K}_{\mathfrak{m}}\subset\mathcal{K}$ then 
\begin{align}\nonumber
 \left< \tilde{\lambda}_{\mathfrak{m}}-\lambda,\varphi-\varphi_{\mathfrak{m}} \right>_{-1,1}\lesssim  \eta^2_4
 \end{align}
 holds. Otherwise, in the case $\mathcal{K}_{\mathfrak{m}}\not\subset\mathcal{K}$
 \begin{align}\nonumber
\left< \tilde{\lambda}_{\mathfrak{m}}-\lambda,\varphi-\varphi_{\mathfrak{m}} \right>_{-1,1} \lesssim \frac{1}{2}\|\lambda -\tilde{\lambda}_{\mathfrak{m}} \|^2_{\ast,\epsilon} + \eta^2_{7} + \eta^2_4 + \eta^2_5 + \eta^2_6.
\end{align}
\end{lemma}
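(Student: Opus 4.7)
My approach is to split the test direction through the obstacles and exploit the sign/complementarity conditions enjoyed by $\lambda$ and $\tilde{\lambda}_{\mathfrak{m}}$. Concretely, I write
$\varphi-\varphi_{\mathfrak{m}} = (\varphi-g) + (g-g_{\mathfrak{m}}) + (g_{\mathfrak{m}}-\varphi_{\mathfrak{m}})$
and handle the three pieces separately. I will use that (i) $\lambda$ obeys the continuous complementarity $\langle\lambda,\varphi-g\rangle=0$, inherited from the sign condition (\ref{SignConditionContinuous}) by taking $\psi=g\in\mathcal{K}$; (ii) $\tilde{\lambda}_{\mathfrak{m}}$ is non-negative on non-negative test functions, since $s_p\ge 0$ for $p\in\mathfrak{N}^{sC}$ (from the discrete variational inequality) and since full-contact nodes satisfy the local sign condition $\langle\mathcal{R}^{lin}_{\mathfrak{m}},\zeta\rangle_{-1,1,\omega_p}\ge 0$ for $\zeta\ge 0$; (iii) $\lambda$ is non-negative as well, again by (\ref{SignConditionContinuous}); and (iv) the discrete constraint enforces $g_{\mathfrak{m}}-\varphi_{\mathfrak{m}}\ge 0$.

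With these ingredients, the $(\varphi-g)$-contribution reduces to $\langle\tilde{\lambda}_{\mathfrak{m}},\varphi-g\rangle$ (complementarity kills the $\lambda$-part), and this is non-positive by (ii) since $\varphi-g\le 0$. The $(g_{\mathfrak{m}}-\varphi_{\mathfrak{m}})$-contribution has $-\langle\lambda,g_{\mathfrak{m}}-\varphi_{\mathfrak{m}}\rangle\le 0$ by (iii)--(iv), and the remaining $\langle\tilde{\lambda}_{\mathfrak{m}},g_{\mathfrak{m}}-\varphi_{\mathfrak{m}}\rangle$ collapses to a sum over $\mathfrak{N}^{sC}$ only, because on full-contact patches $g_{\mathfrak{m}}=\varphi_{\mathfrak{m}}$. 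Inserting the semi-contact definition of $\tilde{\lambda}^p_{\mathfrak{m}}$ and using that $\int_{\omega_p}\phi_p$ and $\int_{\tilde{\omega}_p}\phi_p$ are comparable (both being fixed multiples of $|\omega_p|$ under shape regularity and the fixed number of red-refinements), this piece is bounded by $\sum_{p\in\mathfrak{N}^{sC}}s_p\int_{\tilde{\omega}_p}(g_{\mathfrak{m}}-\varphi_{\mathfrak{m}})\phi_p=\eta_4^2$. If $\mathcal{K}_{\mathfrak{m}}\subset\mathcal{K}$ then $g=g_{\mathfrak{m}}$, the middle piece vanishes, and the proof of the first bound is complete.

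The second bound requires the extra term $\langle\tilde{\lambda}_{\mathfrak{m}}-\lambda,g-g_{\mathfrak{m}}\rangle$. I split $g-g_{\mathfrak{m}}=(g-g_{\mathfrak{m}})^+-(g_{\mathfrak{m}}-g)^+$. Testing $\tilde{\lambda}_{\mathfrak{m}}$ against $(g-g_{\mathfrak{m}})^+$ produces, by the very definitions of $\tilde{\lambda}^p_{\mathfrak{m}}$ at semi- and full-contact nodes, exactly the sums $\eta_5^2$ and $\eta_6^2$. The terms $-\langle\tilde{\lambda}_{\mathfrak{m}},(g_{\mathfrak{m}}-g)^+\rangle$ and $-\langle\lambda,(g-g_{\mathfrak{m}})^+\rangle$ are non-positive by non-negativity and so can be discarded. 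The single remaining term is $\langle\lambda,(g_{\mathfrak{m}}-g)^+\rangle\ge 0$, which is the crux of the proof.

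The main obstacle is this last term: $(g_{\mathfrak{m}}-g)^+$ is not accompanied by a nodal bubble, so it is not immediately related to any $\eta_{k,p}$. The trick I plan is the pointwise inequality
\[
(g_{\mathfrak{m}}-g)^+ \le (\varphi_{\mathfrak{m}}-g)^+ + (g_{\mathfrak{m}}-\varphi_{\mathfrak{m}}),
\]
which follows from $(a+b)^+\le a^++b^+$ together with $g_{\mathfrak{m}}\ge\varphi_{\mathfrak{m}}$. Pairing with $\lambda\ge 0$, the $(g_{\mathfrak{m}}-\varphi_{\mathfrak{m}})$-part precisely cancels the $-\langle\lambda,g_{\mathfrak{m}}-\varphi_{\mathfrak{m}}\rangle$ that was discarded earlier, costing nothing. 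The part $\langle\lambda,(\varphi_{\mathfrak{m}}-g)^+\rangle$ is then handled by adding and subtracting $\tilde{\lambda}_{\mathfrak{m}}$: the pairing with $\lambda-\tilde{\lambda}_{\mathfrak{m}}$ is estimated by the duality $\|\lambda-\tilde{\lambda}_{\mathfrak{m}}\|_{\ast,\epsilon}\|(\varphi_{\mathfrak{m}}-g)^+\|_{\epsilon}$ followed by Young's inequality, producing $\tfrac12\|\lambda-\tilde{\lambda}_{\mathfrak{m}}\|^2_{\ast,\epsilon}$ and, after a finite-overlap partition-of-unity bound $\|(\varphi_{\mathfrak{m}}-g)^+\|_\epsilon^2\lesssim\sum_{p\in\mathfrak{N}^C}\|(\varphi_{\mathfrak{m}}-g)^+\phi_p\|_{\omega_p,\epsilon}^2=\eta_7^2$, the desired $\eta_7^2$ term. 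The leftover $\langle\tilde{\lambda}_{\mathfrak{m}},(\varphi_{\mathfrak{m}}-g)^+\rangle$ is localized nodewise and absorbed into $\eta_5^2+\eta_6^2$: full-contact nodes reproduce $\eta_{6,p}^2$ directly, since $\varphi_{\mathfrak{m}}=g_{\mathfrak{m}}$ on $\omega_p$, and semi-contact nodes are controlled via $c_p((\varphi_{\mathfrak{m}}-g)^+)\lesssim c_p((g_{\mathfrak{m}}-g)^+)$ (using $\varphi_{\mathfrak{m}}\le g_{\mathfrak{m}}$) yielding $\eta_{5,p}^2$. Collecting all pieces gives the second inequality.
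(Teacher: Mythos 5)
Your argument is correct in substance and arrives at exactly the same collection of terms, but by a genuinely different decomposition. In the first case the paper does not need any complementarity identity: since $\varphi_{\mathfrak{m}}\in\mathcal{K}$ is admissible in Problem \ref{Problem:WeakForm}, the sign condition (\ref{SignConditionContinuous}) with $\psi=\varphi_{\mathfrak{m}}$ gives $\left<-\lambda,\varphi-\varphi_{\mathfrak{m}}\right>_{-1,1}\leq 0$ directly, and the remaining $\left<\tilde{\lambda}_{\mathfrak{m}},\varphi-\varphi_{\mathfrak{m}}\right>_{-1,1}$ is bounded nodewise exactly as you do (full-contact terms are nonpositive, semi-contact terms give $\eta_4^2$ via the comparability of $\int_{\omega_p}\phi_p$ and $\int_{\tilde{\omega}_p}\phi_p$). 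In the second case the paper introduces the truncation $\varphi^{\ast}_{\mathfrak{m}}:=\mathrm{min}\{\varphi_{\mathfrak{m}},g\}\in\mathcal{K}$, splits $\left<\lambda,\varphi_{\mathfrak{m}}-\varphi\right>_{-1,1}$ through $\varphi^{\ast}_{\mathfrak{m}}$, drops $\left<\lambda,\varphi^{\ast}_{\mathfrak{m}}-\varphi\right>_{-1,1}\leq 0$, applies Young's inequality to $\left<\lambda-\tilde{\lambda}_{\mathfrak{m}},\varphi_{\mathfrak{m}}-\varphi^{\ast}_{\mathfrak{m}}\right>_{-1,1}$ with $\varphi_{\mathfrak{m}}-\varphi^{\ast}_{\mathfrak{m}}=(\varphi_{\mathfrak{m}}-g)^+$ to obtain $\eta_7^2$, and uses the chain $(\varphi-\varphi^{\ast}_{\mathfrak{m}})\leq(g-g_{\mathfrak{m}})^++(g_{\mathfrak{m}}-\varphi_{\mathfrak{m}})$ to extract $\eta_4^2,\eta_5^2,\eta_6^2$; your pointwise inequality $(g_{\mathfrak{m}}-g)^+\leq(\varphi_{\mathfrak{m}}-g)^++(g_{\mathfrak{m}}-\varphi_{\mathfrak{m}})$ and your Young step play exactly these roles, so the two proofs are isomorphic at the level of the final estimates. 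The one place where your route is weaker is that you pair $\lambda$ with the pieces $\varphi-g$ and $g_{\mathfrak{m}}-\varphi_{\mathfrak{m}}$ individually and invoke $\lambda\geq 0$ together with $\left<\lambda,\varphi-g\right>_{-1,1}=0$ "by taking $\psi=g$". The sign condition (\ref{SignConditionContinuous}) characterizes $\lambda$ only against admissible directions, $g$ need not lie in $\mathcal{K}$ (only $\varphi^D\leq g$ on $\Gamma^D$ is assumed), and the individual pieces $\varphi-g$ and $g_{\mathfrak{m}}-\varphi_{\mathfrak{m}}$ need not vanish on $\Gamma^D$ even though their sum with $g-g_{\mathfrak{m}}$ does; the paper's truncation $\varphi^{\ast}_{\mathfrak{m}}$ is built precisely so that every function tested against $\lambda$ has the correct Dirichlet trace. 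Your version is fine for pure Neumann problems (as in the paper's numerical examples) or when $g=g_{\mathfrak{m}}=\varphi^D$ on $\Gamma^D$; in general you should regroup the boundary-incompatible pieces, e.g.\ by switching to the paper's $\varphi^{\ast}_{\mathfrak{m}}$.
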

\begin{proof}
First, we consider the case $g = g_{\mathfrak{m}}$ such that $\varphi_{\mathfrak{m}}\in\mathcal{K}_{\mathfrak{m}}\subset\mathcal{K}$ is an admissible function in Problem \ref{Problem:WeakForm} so that $\left<-\lambda,\varphi-\varphi_{\mathfrak{m}} \right>_{-1,1}\leq 0.$ For semi-contact nodes we exploit the sign condition (\ref{SignConditionContinuous}) and for full-contact nodes $\varphi_{\mathfrak{m}}=g_{\mathfrak{m}}$ and $\left<\mathcal{R}^{lin}_{\mathfrak{m}} ,\psi \right>_{-1,1}\leq 0$ for $\psi\leq 0$. Further, we use $\varphi\leq g_{\mathfrak{m}}$ and
 we exploit that $\mathrm{diam}(\tilde{\omega}_p)$ is a fixed portion of $\mathrm{diam}(\omega_p)$. Thus, 
 \begin{align}\nonumber
 \left< \tilde{\lambda}_{\mathfrak{m}}-\lambda,\varphi-\varphi_{\mathfrak{m}} \right>_{-1,1}&\leq \left<\tilde{\lambda}_{\mathfrak{m}} , \varphi-\varphi_{\mathfrak{m}}\right>_{-1,1}\\ \nonumber
 & = \sum_{p\in\mathfrak{N}^{sC}}\left<\lambda_{\mathfrak{m}} ,\phi_p \right>_{-1,1}c_p(\varphi-\varphi_{\mathfrak{m}}) + \sum_{p\in\mathfrak{N}^{fC}}\left<\mathcal{R}^{lin}_{\mathfrak{m}} ,(\varphi-g_{\mathfrak{m}})\phi_p \right>_{-1,1}\\ \label{ComplementarityBoundDiscrete}
 &\lesssim \sum_{p\in\mathfrak{N}^{sC}}s_p\int_{\tilde{\omega}_p}(g_{\mathfrak{m}}-\varphi_{\mathfrak{m}})\phi_p = \eta^2_4.
 \end{align}
 
Second, we consider an arbitrary choice of $g\in H^{1}(\Omega)$. In this case $\mathcal{K}_{\mathfrak{m}}\not\subset\mathcal{K}$ and thus, we cannot exploit $\left<-\lambda,\varphi-\varphi_{\mathfrak{m}} \right>_{-1,1}\leq 0.$ Therefore, we define 
\begin{equation*}
\varphi^{\ast}_{\mathfrak{m}} := \mathrm{min}\{\varphi_{\mathfrak{m}}, g\}\in H^{1}(\Omega)
\end{equation*}
and make use of  $\left< \lambda,  \varphi^{\ast}_{\mathfrak{m}}-\varphi\right>_{-1,1}\leq 0$.
With this we get
\begin{align*}
\left< \lambda, \varphi_{\mathfrak{m}} -\varphi\right>_{-1,1} &= \left< \lambda, \varphi_{\mathfrak{m}} -\varphi^{\ast}_{\mathfrak{m}} + \varphi^{\ast}_{\mathfrak{m}}-\varphi\right>_{-1,1} \\
&\leq  \left< \lambda -\tilde{\lambda}_{\mathfrak{m}} , \varphi_{\mathfrak{m}} -\varphi^{\ast}_{\mathfrak{m}} \right>_{-1,1} + \left<\tilde{\lambda}_{\mathfrak{m}} , \varphi_{\mathfrak{m}} -\varphi^{\ast}_{\mathfrak{m}} \right>_{-1,1} \\
&\lesssim \frac{1}{2}\|\lambda -\tilde{\lambda}_{\mathfrak{m}} \|^2_{\ast,\epsilon} + \frac{1}{2}\| \varphi_{\mathfrak{m}} -\varphi^{\ast}_{\mathfrak{m}}\|^2_{\epsilon} + \left<\tilde{\lambda}_{\mathfrak{m}} , \varphi_{\mathfrak{m}} -\varphi^{\ast}_{\mathfrak{m}} \right>_{-1,1}
\end{align*}
and thus, exploiting, $(\varphi-\varphi_{\mathfrak{m}}^{\ast})\leq (g-\varphi_{\mathfrak{m}}^{\ast})\leq (g-\varphi_{\mathfrak{m}})^+ \leq (g-g_{\mathfrak{m}})^+ + (g_{\mathfrak{m}}-\varphi_{\mathfrak{m}}) $ and additionally for full-contact nodes $g_{\mathfrak{m}} = \varphi_{\mathfrak{m}}$ we get
\begin{align}\nonumber
\left< \tilde{\lambda}_{\mathfrak{m}}-\lambda,\varphi-\varphi_{\mathfrak{m}} \right>_{-1,1} &\lesssim \frac{1}{2}\|\lambda -\tilde{\lambda}_{\mathfrak{m}} \|^2_{\ast,\epsilon} + \frac{1}{2}\| \varphi_{\mathfrak{m}} -\varphi^{\ast}_{\mathfrak{m}}\|^2_{\epsilon} + \left<\tilde{\lambda}_{\mathfrak{m}} , \varphi -\varphi^{\ast}_{\mathfrak{m}} \right>_{-1,1}\\ \nonumber
&\lesssim \frac{1}{2}\|\lambda -\tilde{\lambda}_{\mathfrak{m}} \|^2_{\ast,\epsilon} + \frac{1}{2}\| \varphi_{\mathfrak{m}} -\varphi^{\ast}_{\mathfrak{m}}\|^2_{\epsilon}\\ \nonumber
&\qquad \quad + \sum_{p\in\mathfrak{N}^{sC}}\left< \lambda_{\mathfrak{m}},\phi_p\right>_{-1,1}c_p(\varphi-\varphi^{\ast}_{\mathfrak{m}}) \\ \nonumber
&\qquad\quad + \sum_{p\in\mathfrak{N}^{fC}}\left<\tilde{\lambda}^p_{\mathfrak{m}} ,(\varphi-\varphi^{\ast}_{\mathfrak{m}})\phi_p \right>_{-1,1}\\ \nonumber
&\lesssim  \frac{1}{2}\|\lambda -\tilde{\lambda}_{\mathfrak{m}} \|^2_{\ast,\epsilon} + \frac{1}{2}\| \varphi_{\mathfrak{m}} -\varphi^{\ast}_{\mathfrak{m}}\|^2_{\epsilon}\\ \nonumber
&\qquad \quad + \sum_{p\in\mathfrak{N}^{sC}}s_p\int_{\tilde{\omega}_p}(g_{\mathfrak{m}}-\varphi_{\mathfrak{m}})\phi_p +s_p\int_{\tilde{\omega}_p}((g-g_{\mathfrak{m}})^{+})\phi_p\\ \nonumber
&\qquad \quad + \sum_{p\in\mathfrak{N}^{fC}}\left<\tilde{\lambda}^p_{\mathfrak{m}} , (g-g_{\mathfrak{m}})^{+}\phi_p \right>_{-1,1}\\ \label{ComplementarityBound}
&=   \frac{1}{2}\|\lambda -\tilde{\lambda}_{\mathfrak{m}} \|^2_{\ast,\epsilon} + \eta^2_{7} + \eta^2_4 + \eta^2_5 + \eta^2_6.
\end{align}
\end{proof}

\section{Efficiency of the error estimator}\label{Sec:Efficiency}
In this Section we give the proofs of Theorem \ref{Theorem:LowerBound} and \ref{Theorem:LowerBound2}.

\subsection{Local error bound by $\eta_{1,p},\eta_{2,p},\eta_{3,p}$}
We start with $\eta_{1,p}$ for which we use the properties of the element bubble functions $\Psi_{\mathfrak{e}}:= c\Pi_{p\in\mathfrak{e}}\phi_p$,  see \cite{Verfuerth_2013}.
\begin{itemize}
\item $0\leq \Psi_{\mathfrak{e}}\leq 1$
\item $\|\nabla (\Psi_{\mathfrak{e}} v)\|_{\mathfrak{e}}\lesssim h_{\mathfrak{e}}^{-1}\|v\|_{\mathfrak{e}}$ for all polynomials $v$
\end{itemize}
where $h_{\mathfrak{e}}$ is the diameter of the element $\mathfrak{e}$.

In the following we make use of $h_p\approx h_{\mathfrak{e}}\approx h_{\mathfrak{s}}$ with $h_{\mathfrak{s}}=\mathrm{diam}(\omega_{\mathfrak{s}})$.
With respect to the energy norm (\ref{EnergyNorm}) this implies for all polynomials $v$
\begin{align}\label{WeightedInverseInequality}
\|\Psi_{\mathfrak{e}}v\|_{\epsilon,\mathfrak{e}}\lesssim (\epsilon h_{\mathfrak{e}}^{-1}+1)\|v\|_{\mathfrak{e}}\lesssim \mathrm{max}\{\frac{\epsilon}{h_{\mathfrak{e}}},1\}\|v\|_{\mathfrak{e}}.
\end{align}
For all $p\in\mathfrak{N}$,  $\tilde{\omega}_p$ is the patch around $p$ with respect to a twice uniformly red-refined mesh $\widetilde{\mathfrak{M}}$ with $\tilde{\mathfrak{e}}\in\widetilde{\mathfrak{M}}$ and $h_{\tilde{\mathfrak{e}}}= ch_{\mathfrak{e}}$. We define a linear combination of element bubble functions $\Psi_j$ with respect to all elements $\tilde{\mathfrak{e}}_j\subset \mathfrak{e}$, i.e. $\theta_{\mathfrak{e}} = \sum_{j=1} a_j\Psi_j$. We choose $a_j=0$ for all elements $\tilde{\mathfrak{e}}_j$ containing a node $p\in\mathfrak{N}^{sC}$ such that 
\begin{equation} \label{MeanValueZero1}
\int_{\tilde{\mathfrak{e}}_j}\phi_q\theta_{\mathfrak{e}}\phi_{p} = 0\quad\forall q\in\mathfrak{e}. 
\end{equation}

The other coefficients of the linear combination are chosen such that the bubble function $\theta_{\mathfrak{e}}$ fulfills the following six conditions 
\begin{align*}
\int_{\mathfrak{e}}\phi_q\phi_r=\sum_{p\in\mathfrak{N}\backslash\mathfrak{N}^{fC}}\int_{\mathfrak{e}} \phi_q\phi_r\theta_{\mathfrak{e}}\phi_p\quad \forall q, r \in \mathfrak{e}.
\end{align*}
As we have even more degrees of freedom (coefficients $a_j$) than conditions this problem is solvable. 

We abbreviate $r(\varphi_{\mathfrak{m}}):= f- \varphi_{\mathfrak{m}}$, define the linear approximation $\bar{r}(\varphi_{\mathfrak{m}}):= \bar{f}- \varphi_{\mathfrak{m}}$ where $\bar{f}$ is assumed to be the mean value.
Thus, 
\begin{align}\nonumber
&\|\bar{r}(\varphi_{\mathfrak{m}})\|_{\mathfrak{e}}^2\\ \nonumber
&\lesssim \sum_{p\in\mathfrak{N}\backslash\mathfrak{N}^{fC}}\int_{\mathfrak{e}}(\bar{r}(\varphi_{\mathfrak{m}}))(\bar{r}(\varphi_{\mathfrak{m}}))\theta_{\mathfrak{e}}\phi_p\\ \nonumber
& = \sum_{p\in\mathfrak{N}\backslash\mathfrak{N}^{fC}}\int_{\mathfrak{e}}(r(\varphi_{\mathfrak{m}}))(\bar{r}(\varphi_{\mathfrak{m}}))\theta_{\mathfrak{e}}\phi_p + \sum_{p\in\mathfrak{N}\backslash\mathfrak{N}^{fC}}\int_{\mathfrak{e}}(\bar{r}(\varphi_{\mathfrak{m}})-r(\varphi_{\mathfrak{m}}))(\bar{r}(\varphi_{\mathfrak{m}}))\theta_{\mathfrak{e}}\phi_p\\ \nonumber
& = \left<G_{\mathfrak{m}},\bar{r}(\varphi_{\mathfrak{m}})\theta_{\mathfrak{e}}\right>_{-1,1} - \sum_{p\in\mathfrak{N}\backslash\mathfrak{N}^{fC}}\int_{\gamma^I_p}\epsilon^2[\nabla \varphi_{\mathfrak{m}}]\bar{r}(\varphi_{\mathfrak{m}})\theta_{\mathfrak{e}}\phi_p \\ \nonumber
&\quad + \sum_{p\in\mathfrak{N}\backslash\mathfrak{N}^{fC}}\int_{\mathfrak{e}}(\bar{r}(\varphi_{\mathfrak{m}})-r(\varphi_{\mathfrak{m}}))(\bar{r}(\varphi_{\mathfrak{m}}))\theta_{\mathfrak{e}}\phi_p + \sum_{p\in\mathfrak{N}\backslash\mathfrak{N}^{fC}}\left<\lambda_{\mathfrak{m}} ,\phi_p \right>_{-1,1}c_p(\bar{r}(\varphi_{\mathfrak{m}})\theta_{\mathfrak{e}})\\ \label{LowerBoundInnerRes}
& \lesssim \|G_{\mathfrak{m}}\|_{\ast,\epsilon,\omega_p} \|\bar{r}(\varphi_{\mathfrak{m}})\theta_{\mathfrak{e}}\|_{\epsilon,\omega_p} + \|\bar{r}(\varphi_{\mathfrak{m}})- r(\varphi_{\mathfrak{m}})\|_{\mathfrak{e}}\|\bar{r}(\varphi_{\mathfrak{m}})\theta_{\mathfrak{e}}\|_{\mathfrak{e}}
\end{align}
as $c_p(\theta_{\mathfrak{e}})=0$ for all $p\in\mathfrak{N}^{sC}$ following from (\ref{MeanValueZero1}), $\left<\lambda_{\mathfrak{m}}, \phi_p\right>_{-1,1}=0$ for all $p\in\mathfrak{N}\backslash\mathfrak{N}^{C}$ and $\theta_{\mathfrak{e}}$ vanishes on the edges.

Exploiting (\ref{WeightedInverseInequality}) for $\theta_{\mathfrak{e}}$ instead of $\Psi_{\mathfrak{e}}$, dividing (\ref{LowerBoundInnerRes}) by $\|\bar{r}(\varphi_{\mathfrak{m}})\|_{\mathfrak{e}}$ and multiplying with $\mathrm{min}\{\frac{h_{\mathfrak{e}}}{\epsilon}, 1\} = (\mathrm{max}\{\frac{\epsilon}{h_{\mathfrak{e}}},1\})^{-1}$ we arrive at
\begin{align}\label{LowerBound1}
\mathrm{min}\{\frac{h_{\mathfrak{e}}}{\epsilon},1\}\|\bar{r}(\varphi_{\mathfrak{m}})\|_{\omega_p} \lesssim
\|G_{\mathfrak{m}}\|_{\ast,\epsilon,\omega_p} +\mathrm{min}\{\frac{h_{\mathfrak{e}}}{\epsilon},1\} \|\bar{r}(\varphi_{\mathfrak{m}})- r(\varphi_{\mathfrak{m}})\|_{\omega_p}.
\end{align}
Next, we apply the triangle inequality $\|r\|\leq \|\bar{r}\|+ \|\bar{r}-r\|$, exploit the definition of $r$, and $\bar{r}$, respectively, and together  with (\ref{LowerBound0}) we get the desired result 
\begin{align}\nonumber
\eta_{1,p} = \mathrm{min}\{\frac{h_{\mathfrak{e}}}{\epsilon},1\}\|r(\varphi_{\mathfrak{m}})\|_{\omega_p} &\lesssim 
\|G_{\mathfrak{m}}\|_{\ast,\epsilon,\omega_p} +\mathrm{osc}_p(f)\\ \label{LowerBoundEta1}
&\lesssim \|\varphi-\varphi_{\mathfrak{m}}\|_{\epsilon,\omega_p} + \|\lambda-\tilde{\lambda}_{\mathfrak{m}}\|_{\ast,\epsilon,\omega_p} +\mathrm{osc}_p(f).
\end{align}

In order to prove the lower bound in terms of $\eta_{2,p}$ we use the properties of side bubble functions. Following the ansatz given in  \cite{Verfuerth_1998b} we define side bubble functions with the help of basis functions belonging to a modified element. On the reference element $\hat{\mathfrak{e}}$ the corresponding transformation $\Phi_{\delta}:\mathbb{R}^n\rightarrow\mathbb{R}^n$ maps the coordinates $x_1,\ldots, x_n$ to $x_1,\ldots ,\delta x_n$.  The basis functions on the transformed reference element are given by $\hat{\phi}_{\delta,\hat{p}_i} := \hat{\phi}_{\hat{p}_i}\circ\Phi^{-1}_{\delta}$ for $i=1,\ldots, n+1$ on $\Phi_{\delta}(\hat{\mathfrak{e}})$ and $\hat{\phi}_{\delta,\hat{p}_i} = 0$ on $\hat{\mathfrak{e}}\backslash \Phi_{\delta}(\hat{\mathfrak{e}})$, e.g. in the two-dimensional case 
they are given by 
\begin{align*}
\hat{\phi}_{\delta,\hat{p}_1} = x_1,\qquad\hat{\phi}_{\delta,\hat{p}_2} = \frac{1}{\delta}x_2,\qquad \hat{\phi}_{\delta,\hat{p}_3} = (1-x_1-\frac{1}{\delta}x_2).
\end{align*}
Let $\hat{\mathfrak{s}}_i$ be the sides opposite to the nodes $\hat{p}_i$.
The modified side bubble function we will consider is given by $\Psi_{\delta,\hat{\mathfrak{s}}_n} := \hat{\phi}_{\delta,\hat{p}_{n+1}}\Pi_{i=1}^{n-1}\hat{\phi}_{\delta,\hat{p}_i}$, e.g. in the two-dimensional case 
$\Psi_{\delta,\hat{\mathfrak{s}}_3} = (1-x_1-\frac{1}{\delta}x_2)x_1$. 
Let $F_{\mathfrak{s}}:\mathfrak{e}\rightarrow\hat{\mathfrak{e}}$ be the linear transformation which maps $\mathfrak{s}$ on $\hat{\mathfrak{s}}_n$. 
Then it follows from \cite[Lemma 3.4]{Verfuerth_1998b} together with the transformation rule 
\begin{equation}\label{PropModBubbFunc_Trans}
\begin{split}
\|\Psi_{\delta,\mathfrak{s}}w\|_{\mathfrak{e}}&\lesssim h_{\mathfrak{e}}^{\frac{1}{2}}\sqrt{\delta}\|w\|_{\mathfrak{s}},\\
\|\frac{\partial}{\partial x_i}(\Psi_{\delta,\mathfrak{s}}w)\|_{\mathfrak{e}}&\lesssim h_{\mathfrak{e}}^{-\frac{1}{2}} \sqrt{\delta}\|w\|_{\mathfrak{s}},\quad\forall 1\leq i\leq n-1\\
\|\frac{\partial}{\partial x_n}(\Psi_{\delta,\mathfrak{s}}w)\|_{\mathfrak{e}}&\lesssim h_{\mathfrak{e}}^{-\frac{1}{2}} \frac{1}{\sqrt{\delta}}\|w\|_{\mathfrak{s}}.
\end{split}
\end{equation}
With respect to the $\|\cdot\|_{\epsilon}$ norm we get
\begin{equation}\label{WeightedInverseInequalitySide}
\|\Psi_{\delta,\mathfrak{s}}w\|_{\epsilon,\mathfrak{e}}\lesssim (h_{\mathfrak{e}}^{\frac{1}{2}}\delta^{\frac{1}{2}} + \epsilon h_{\mathfrak{e}}^{-\frac{1}{2}}\delta^{-\frac{1}{2}})\|w \|_{\mathfrak{s}}.
\end{equation} 
Similar to the proof of the lower bound in terms of $\eta_{1,p}$ we construct a linear combination $\theta_{\delta,\mathfrak{s}} = \sum_{j} a_j\Psi_{\delta,j}$ of modified side bubble functions $\Psi_{\delta,\mathfrak{s}}$ with respect to all sides $\tilde{\mathfrak{s}}_j$ of the partition of $\mathfrak{s}$ such that $c_p(\theta_{\delta,\mathfrak{s}})=0$. Therefore we assume that $\tilde{\omega}_p$ is the patch around $p$ with respect to two uniform red-refinements. We choose $a_j=0$ for all sides $\tilde{\mathfrak{s}}_j$ containing a node $p\in\mathfrak{N}^{sC}$ such that 
\begin{equation} \label{MeanValueZero}
\int_{\tilde{\mathfrak{e}}_j}\theta_{ \delta,\mathfrak{s}}\phi_{p} = 0\quad\mbox{with}\quad p\in\tilde{\mathfrak{s}}_j. 
\end{equation}
The other coefficients of the linear combination are chosen such that the bubble function $\theta_{\delta,\mathfrak{s}}$ fulfills the following property
\begin{equation} \label{RelNormInt}
\int_{\mathfrak{s}}1=\sum_{p\in\mathfrak{N}\backslash\mathfrak{N}^{fC}}\int_{\mathfrak{s}} \theta_{\delta,\mathfrak{s}}\phi_p.
\end{equation}

We set $w:= \epsilon^2[\nabla \varphi_{\mathfrak{m}}]$. 
Thus, we apply  (\ref{PropModBubbFunc_Trans}), (\ref{WeightedInverseInequalitySide}). Together with (\ref{MeanValueZero}) and (\ref{RelNormInt}), we get
\begin{align*}
\|\epsilon^2[\nabla \varphi_{\mathfrak{m}}]\|_{\mathfrak{s}}^2 &= \sum_{p\in\mathfrak{N}\backslash\mathfrak{N}^{fC}}\int_{\mathfrak{s}} \epsilon^4[\nabla \varphi_{\mathfrak{m}}] [\nabla \varphi_{\mathfrak{m}}]\theta_{\delta,\mathfrak{s}}\phi_p\\
&\lesssim \left<G_{\mathfrak{m}}, \epsilon^2[\nabla \varphi_{\mathfrak{m}}]\theta_{\delta,\mathfrak{s}}\right>_{-1,1} + \sum_{p\in\mathfrak{N}\backslash\mathfrak{N}^{fC}}\int_{\omega_{\mathfrak{s}}}r(\varphi_{\mathfrak{m}})\epsilon^2[\nabla \varphi_{\mathfrak{m}}]\theta_{\delta,\mathfrak{s}}\phi_p\\
&\qquad\quad+ \sum_{p\in\mathfrak{N}\backslash\mathfrak{N}^{fC}}\left<\tilde{\lambda}_{\mathfrak{m}} ,\phi_p \right>_{-1,1}c_p(\bar{r}(\varphi_{\mathfrak{m}})\theta_{\delta,\mathfrak{s}})\\
&\lesssim \|G_{\mathfrak{m}}\|_{\ast,\epsilon}\|\epsilon^2[\nabla \varphi_{\mathfrak{m}}]\theta_{\delta,\mathfrak{s}}\|_{\epsilon, \omega_{\mathfrak{s}}} + \|r(\varphi_{\mathfrak{m}})\|_{\omega_{\mathfrak{s}}} \|\epsilon^2[\nabla \varphi_{\mathfrak{m}}]\theta_{\delta,\mathfrak{s}}\|_{\omega_{\mathfrak{s}}}\\
&\lesssim \|G_{\mathfrak{m}}\|_{\ast,\epsilon} \{h_{\mathfrak{s}}^{-\frac{1}{2}}\delta^{-\frac{1}{2}}\epsilon + \delta^{\frac{1}{2}} h_{\mathfrak{s}}^{\frac{1}{2}}\} \|\epsilon^2[\nabla \varphi_{\mathfrak{m}}]\|_{\mathfrak{s}}  + \delta^{\frac{1}{2}}h_{\mathfrak{s}}^{\frac{1}{2}}\|r(\varphi_{\mathfrak{m}})\|_{\omega_{\mathfrak{s}}}  \|\epsilon^2[\nabla \varphi_{\mathfrak{m}}]\|_{\mathfrak{s}} .
\end{align*}
Dividing by $\|\epsilon^2[\nabla \varphi_{\mathfrak{m}}]\|_{\mathfrak{s}} $ and multiplying with $\epsilon^{-\frac{1}{2}}\mathrm{min}\{\frac{h_{\mathfrak{s}}}{\epsilon},1\}^{\frac{1}{2}}$ and choosing $\delta = \mathrm{min}\{\frac{\epsilon}{h_{\mathfrak{s}}},1\}$ we get the factors 
\begin{align*}
&\epsilon^{-\frac{1}{2}}\mathrm{min}\{\frac{h_{\mathfrak{s}}}{\epsilon},1\}^{\frac{1}{2}}\{\epsilon h_{\mathfrak{s}}^{-\frac{1}{2}}\mathrm{min}\{\frac{\epsilon}{h_{\mathfrak{s}}},1\}^{-\frac{1}{2}} + \mathrm{min}\{\frac{\epsilon}{h_{\mathfrak{s}}},1\}^{\frac{1}{2}} h_{\mathfrak{s}}^{\frac{1}{2}}\}\\
=&\epsilon^{-\frac{1}{2}}\mathrm{min}\{\frac{h_{\mathfrak{s}}}{\epsilon},1\}^{\frac{1}{2}} \epsilon^{\frac{1}{2}}\mathrm{min}\{\frac{h_{\mathfrak{s}}}{\epsilon},1\}^{-\frac{1}{2}} + \epsilon^{-\frac{1}{2}}\mathrm{min}\{\frac{h_{\mathfrak{s}}}{\epsilon},1\}^{\frac{1}{2}}\epsilon^{\frac{1}{2}}\mathrm{min}\{\frac{h_{\mathfrak{s}}}{\epsilon},1\}^{\frac{1}{2}}\\
\lesssim& 1 
\end{align*}
and 
\begin{align*}
\epsilon^{-\frac{1}{2}}\mathrm{min}\{\frac{h_{\mathfrak{s}}}{\epsilon},1\}^{\frac{1}{2}}\mathrm{min}\{\frac{\epsilon}{h_{\mathfrak{s}}},1\}^{\frac{1}{2}}h_{\mathfrak{s}}^{\frac{1}{2}} =   \mathrm{min}\{\frac{h_{\mathfrak{s}}}{\epsilon},1\}.
\end{align*}
Thus, together with the estimate (\ref{LowerBoundEta1}) we arrive at
\begin{align*}
\epsilon^{-\frac{1}{2}}\mathrm{min}\{\frac{h_{\mathfrak{s}}}{\epsilon},1\}^{\frac{1}{2}}\|\epsilon^2[\nabla \varphi_{\mathfrak{m}}]\|_{\mathfrak{s}} &\lesssim \|G_{\mathfrak{m}}\|_{\ast,\epsilon} + \mathrm{min}\{\frac{h_{\mathfrak{s}}}{\epsilon},1\} \|\bar{r}(\varphi)- r(\varphi_{\mathfrak{m}})\|_{\omega_{s}}
\end{align*}
and get the local lower bound
\begin{align}\label{LowerBoundEta2}
\eta_{2,p} \lesssim \|\varphi-\varphi_{\mathfrak{m}}\|_{\epsilon,\omega_p} + \|\lambda-\tilde{\lambda}_{\mathfrak{m}}\|_{\ast,\epsilon,\omega_p} + 
\mathrm{osc}_p(f). 
\end{align}
To derive a local lower bound in terms of $\eta_{3,p}$ we can proceed in the same way to get
\begin{align}\label{LowerBoundEta3}
\eta_{3,p} \lesssim \|\varphi-\varphi_{\mathfrak{m}}\|_{\epsilon,\omega_p} + \|\lambda-\tilde{\lambda}_{\mathfrak{m}}\|_{\ast,\epsilon,\omega_p} + \mathrm{osc}_p(f) + \mathrm{osc}_p(\pi).
\end{align}

Theorem \ref{Theorem:LowerBound} follows from (\ref{LowerBoundEta1}, \ref{LowerBoundEta2}, \ref{LowerBoundEta3}).

\subsection{Local error bound in terms of $\eta_{4,p}$}

In order to show that also $\eta_{4,p}$ constitutes a local lower bound, we proceed almost as in \cite{Krause_Veeser_Walloth_2015}.
We assume $s_p>0$ and $(g_{\mathfrak{m}}-\varphi_{\mathfrak{m}})(q)\ge 0$ for at least one node in $\omega_p$ so that $\eta_{4,p}\neq 0$.
The assumption $s_p>0$ implies that $p$ is a contact node, i.e. $(g_{\mathfrak{m}}-\varphi_{\mathfrak{m}})(p)=0$.
Choose a node $\hat{q}$ in $\omega_p$ such that $(g_{\mathfrak{m}}-\varphi_{\mathfrak{m}})(\hat{q})\ge (g_{\mathfrak{m}}-\varphi_{\mathfrak{m}})(q)$ for all $q\in\omega_p$. We denote the unit vector pointing from $p$ to $\hat{q}$ by $\boldsymbol{\tau}$.
We denote the element to wich $p$ and $\hat{q}$ belong  by $\mathfrak{e}_1$ and the element in $\omega_p$ which is intersected by $-\boldsymbol{\tau}$, starting in $p$, is denoted by $\mathfrak{e}_N$. The elements between $\mathfrak{e}_1$ and $\mathfrak{e}_N$ are denoted in order by $\mathfrak{e}_i$, $i=2,\ldots, N-1$.
We use Taylor expansion around $(g_{\mathfrak{m}}-\varphi_{\mathfrak{m}})(p)=0$ and add the gradient in the opposite direction $-\boldsymbol{\tau}$. Due to the constraints, $\nabla|_{\mathfrak{e}}(g_{\mathfrak{m}}-\varphi_{\mathfrak{m}})\cdot (\pm \boldsymbol{\tau}) \ge 0$ holds and thus
\begin{align*}
(g_{\mathfrak{m}}-\varphi_{\mathfrak{m}})(\hat{q}) &= \nabla|_{\mathfrak{e}_1}(g_{\mathfrak{m}}-\varphi_{\mathfrak{m}})(\hat{q}-p)\lesssim h_p\nabla|_{\mathfrak{e}_1}(g_{\mathfrak{m}}-\varphi_{\mathfrak{m}})\cdot \boldsymbol{\tau}\\
& \lesssim h_p(\nabla|_{\mathfrak{e}_1}(g_{\mathfrak{m}}-\varphi_{\mathfrak{m}}) -  \nabla|_{\mathfrak{e}_N}(g_{\mathfrak{m}}-\varphi_{\mathfrak{m}}))\cdot \boldsymbol{\tau}\\
& \lesssim h_p\sum_{i=1}^N|\nabla|_{\mathfrak{e}_i} (g_{\mathfrak{m}}-\varphi_{\mathfrak{m}})-\nabla|_{\mathfrak{e}_{i-1}}(g_{\mathfrak{m}}-\varphi_{\mathfrak{m}})|\\
&\lesssim h_p h_p^{-\frac{d-1}{2}}\|[\nabla (g_{\mathfrak{m}}-\varphi_{\mathfrak{m}})]^I\|_{\gamma^I_p}.
\end{align*}
For the ease of presentation we set $v_{\mathfrak{m}}= (g_{\mathfrak{m}}-\varphi_{\mathfrak{m}})$ in the following.
Further, we exploit  
\begin{align*}
\left<\lambda_{\mathfrak{m}} ,\phi_p \right>:=\int_{\gamma_{p}^I}\epsilon^2[\nabla (\varphi_{\mathfrak{m}})]\phi_p +\int_{\gamma_{p}^N}(\pi-\epsilon^2\nabla \varphi_{\mathfrak{m}}\cdot\boldsymbol{n})\phi_p   + \int_{\omega_p}r(\varphi_{\mathfrak{m}})\phi_p. 
\end{align*}
Putting together and assuming $h_p <\epsilon$
\begin{align*}
&\eta^2_{4,p}\\
 &= \left<\lambda_{\mathfrak{m}} ,\phi_p \right>c_p(g_{\mathfrak{m}}-\varphi_{\mathfrak{m}})\\
 &\leq  h_p^{d}h_p h_p^{-\frac{d-1}{2}}\|[\nabla v_{\mathfrak{m}}]\|_{\gamma^I_p}h_p^{-d}\left(\|\epsilon^2[\nabla \varphi_{\mathfrak{m}}]\|_{\gamma^I_p} \|\phi_p\|_{\gamma^I_p}+ \|\pi-\epsilon^2\nabla \varphi_{\mathfrak{m}}\cdot\boldsymbol{n}\|_{\gamma^N_p} \|\phi_p\|_{\gamma^N_p}\right.\\
  &\left.\qquad+\|r(\varphi_{\mathfrak{m}})\|_{\omega_p}\|\phi_p\|_{\omega_p} \right)\\
&\leq h_p h_p^{-\frac{d-1}{2}}\|[\nabla v_{\mathfrak{m}}]\|_{\gamma^I_p}\left(\|\epsilon^2[\nabla \varphi_{\mathfrak{m}}]\|_{\gamma^I_p} h_p^{\frac{d-1}{2}}+\|\pi-\epsilon^2\nabla \varphi_{\mathfrak{m}}\cdot\boldsymbol{n}\|_{\gamma^N_p} h_p^{\frac{d-1}{2}}+  \|r(\varphi_{\mathfrak{m}})\|_{\omega_p}h_p^{\frac{d}{2}} \right)\\
&\leq \frac{h_p^{\frac{1}{2}}}{\epsilon}\|\epsilon^2[\nabla \varphi_{\mathfrak{m}}]\|_{\gamma^I_p}\frac{h_p^{\frac{1}{2}}}{\epsilon}\|\epsilon^2[\nabla v_{\mathfrak{m}}]\|_{\gamma^I_p} +
\frac{h_p^{\frac{1}{2}}}{\epsilon}\|\pi-\epsilon^2\nabla \varphi_{\mathfrak{m}}\cdot{\boldsymbol{n}}\|_{\gamma^N_p}\frac{h_p^{\frac{1}{2}}}{\epsilon}\|\epsilon^2[\nabla v_{\mathfrak{m}}]\|_{\gamma^I_p} \\
&\qquad + \frac{h_p^{\frac{1}{2}}}{\epsilon}\|\epsilon^2[\nabla v_{\mathfrak{m}}]\|_{\gamma^I_p}\frac{h_p}{\epsilon}\|r(\varphi_{\mathfrak{m}})\|_{\omega_p}\\
&\lesssim\frac{h_p}{\epsilon}\frac{1}{\epsilon}\|\epsilon^2[\nabla \varphi_{\mathfrak{m}}]\|^2_{\gamma^I_p}  +\frac{h_p}{\epsilon}\frac{1}{\epsilon}\|\pi-\epsilon^2\nabla \varphi_{\mathfrak{m}}\cdot\boldsymbol{n}\|^2_{\gamma^N_p}  + \frac{h_p^2}{\epsilon^2}\|r(\varphi_{\mathfrak{m}})\|^2_{\omega_p}  + \frac{h_p}{\epsilon}\frac{1}{\epsilon}\|\epsilon^2[\nabla v_{\mathfrak{m}}]\|^2_{\gamma^I_p}\\
&\lesssim \eta^2_{1,p} + \eta^2_{2,p} + \eta^2_{3,p} + \frac{h_p}{\epsilon}\frac{1}{\epsilon}\|\epsilon^2[\nabla v_{\mathfrak{m}}]\|^2_{\gamma^I_p}\\
&\lesssim \eta^2_{1,p} + \eta^2_{2,p} + \eta^2_{3,p} + \frac{h_p}{\epsilon}\frac{1}{\epsilon}\|\epsilon^2[\nabla g_{\mathfrak{m}}]\|^2_{\gamma^I_p}
\end{align*}
where in the last line we exploited the definition of $v_{\mathfrak{m}}$ and the triangle inequality.
Thus, together with (\ref{LowerBoundEta1}), (\ref{LowerBoundEta2}), (\ref{LowerBoundEta3}) we get 
\begin{equation}\label{ProofEta4Robust}
\eta^2_{4,p} \lesssim  \|\varphi-\varphi_{\mathfrak{m}}\|_{\epsilon,\omega_p} + \|\lambda-\tilde{\lambda}_{\mathfrak{m}}\|_{\ast,\epsilon,\omega_p} + \mathrm{osc}_p(f) + \mathrm{osc}_p(\pi) +  \left(\frac{h_p}{\epsilon}\frac{1}{\epsilon}\|\epsilon^2[\nabla g_{\mathfrak{m}}]\|^2_{\gamma^I_p}.\right)^{\frac{1}{2}}.
\end{equation}
In the remaining case $h_p >\epsilon$ the weightings in $\eta_{2,p},\eta_{3,p}$ are $\mathrm{min}\{\frac{h_p}{\epsilon},1\}^{\frac{1}{2}}\frac{1}{\epsilon^{\frac{1}{2}}} =\epsilon^{-\frac{1}{2}}$ and  in $\eta_{1,p}$ it is $\mathrm{min}\{\frac{h_p}{\epsilon},1\}=1$. We exploit $h_p\leq 1$ and proceed as before 
\begin{align*}
\eta^2_{4,p} &= \left<\lambda_{\mathfrak{m}} ,\phi_p \right>c_p(g_{\mathfrak{m}}-\varphi_{\mathfrak{m}})\\
&\leq \frac{h_p^{\frac{1}{2}}}{\epsilon}\|\epsilon^2[\nabla \varphi_{\mathfrak{m}}]\|_{\gamma^I_p}\frac{h_p^{\frac{1}{2}}}{\epsilon}\|\epsilon^2[\nabla v_{\mathfrak{m}}]\|_{\gamma^I_p} +\frac{h_p^{\frac{1}{2}}}{\epsilon}\|\pi-\epsilon^2\nabla \varphi_{\mathfrak{m}}\cdot\boldsymbol{n}\|_{\gamma^N_p}\frac{h_p^{\frac{1}{2}}}{\epsilon}\|\epsilon^2[\nabla v_{\mathfrak{m}}]\|_{\gamma^I_p} \\
&\qquad+ \frac{h_p^{\frac{1}{2}}}{\epsilon}\|\epsilon^2[\nabla v_{\mathfrak{m}}]\|_{\gamma^I_p}\frac{h_p}{\epsilon}\|r(\varphi_{\mathfrak{m}})\|_{\omega_p}\\
&\leq \epsilon^{-\frac{1}{2}}\|\epsilon^2[\nabla \varphi_{\mathfrak{m}}]\|_{\gamma^I_p}\epsilon^{-\frac{3}{2}}\|\epsilon^2[\nabla v_{\mathfrak{m}}]\|_{\gamma^I_p} + \epsilon^{-\frac{1}{2}}\|\pi-\epsilon^2\nabla \varphi_{\mathfrak{m}}\cdot\boldsymbol{n}\|_{\gamma^N_p}\epsilon^{-\frac{3}{2}}\|\epsilon^2[\nabla v_{\mathfrak{m}}]\|_{\gamma^I_p}\\
&\qquad + \epsilon^{-2}\|\epsilon^2[\nabla v_{\mathfrak{m}}]\|_{\gamma^I_p}\|r(\varphi_{\mathfrak{m}})\|_{\omega_p}\\
&\lesssim \eta^2_{1,p} + \eta^2_{2,p} + \eta^2_{3,p} + \epsilon^{-4}\|\epsilon^2[\nabla v_{\mathfrak{m}}]\|^2_{\gamma^I_p}\\
&\lesssim \eta^2_{1,p} + C(\epsilon^{-3})\eta^2_{2,p} + \eta^2_{3,p} + \epsilon^{-4}\|\epsilon^2[\nabla g_{\mathfrak{m}}]\|^2_{\gamma^I_p}
\end{align*}
where in the last line we exploited the definition of $v_{\mathfrak{m}}$ and the triangle inequality.
Thus, together with (\ref{LowerBoundEta1}), (\ref{LowerBoundEta2}), (\ref{LowerBoundEta3}) we get 
\begin{equation}\label{ProofEta4NonRobust}
\eta_{4,p} \lesssim C(\epsilon^{-\frac{3}{2}})\left( \|\varphi-\varphi_{\mathfrak{m}}\|_{\epsilon,\omega_p} + \|\lambda-\tilde{\lambda}_{\mathfrak{m}}\|_{\ast,\epsilon,\omega_p} + \mathrm{osc}_p(f) + \mathrm{osc}_p(\pi) +  \epsilon^{-\frac{1}{2}}\left(\|\epsilon^2[\nabla g_{\mathfrak{m}}]\|^2_{\gamma^I_p}\right)^{\frac{1}{2}}\right).
\end{equation}

Theorem \ref{Theorem:LowerBound2} follows from (\ref{LowerBound0}), (\ref{ProofEta4NonRobust}) and (\ref{ProofEta4Robust}).

\section{Numerical results}\label{Sec:NumRes}

The implementation has been carried out in {\tt MATLAB}. 
As basis for the implementation of the adaptive mesh generation we have used \cite[Chapter 5]{Funken_Praetorius_Wissgott_2011} as well as \cite{Bartels_Schreier_2012}. As solver for the variational inequalities we implemented a primal-dual-active set method similar to \cite[Chapter 5.3.1]{Bartels_2015}.

We consider two different examples in $2D$. The starting grid has been four times uniformly refined in Example 1 and three times uniformly refined in Example 2 by means of newest vertex bisection, compare \cite[Chapter 5]{Funken_Praetorius_Wissgott_2011}. As marking strategy for the adaptive process we use the mean value strategy, i.e. an element is marked for refinement if its local element estimator is bigger than $1.2$ times the mean value of all element estimators. The maximal number of elements which has to be passed before the refinement process stops is set to $20000$ elements in both Examples. 

For the first example we define the rotation matrix $\boldsymbol{R}:= \left(\begin{matrix}\cos{\alpha} & -\sin{\alpha}\\ \sin{\alpha} &\cos{\alpha} \end{matrix}\right)$ with $\alpha = \frac{-\pi}{16}$ and the rotated strips $I_1:= \boldsymbol{R}\left[\left(\begin{matrix} -1.5\\ y\end{matrix}\right),\left(\begin{matrix}-0.5\\ y\end{matrix}\right) \right] $, $I_2 := \boldsymbol{R}\left[\left(\begin{matrix} 0.5\\ y\end{matrix}\right),\left(\begin{matrix}1.5\\ y\end{matrix}\right) \right]$ for all $y\in \mathbb{R}$. We define the domains $\Omega:= [-2.5, 2.5]\times[-2.5,2.5]$ and $\Omega_1:=\Omega\cap I_1$ and $\Omega_2:=\Omega\cap I_2$. 
The first problem is given by
\begin{example}[Boundary layers enforced by obstacle constraints]\label{Example1}
\begin{align*}
-\epsilon^2 \Delta \varphi + \varphi &= -1\quad \mbox{in} \quad\Omega_1\cup\Omega_2\\
-\epsilon^2 \Delta \varphi + \varphi &\ge -1\quad \mbox{in} \quad \Omega\backslash(\Omega_1\cup\Omega_2)\\
\varphi&\ge 0 \quad \mbox{in}\quad\Omega\backslash \left(\Omega_1\cup\Omega_2\right)\\
(\varphi)(-\epsilon^2\Delta \varphi+  \varphi +1 ) &= 0 \quad \mbox{in}\quad\Omega\backslash \left(\Omega_1\cup\Omega_2\right)\\
\boldsymbol{\nabla}\varphi\cdot\boldsymbol{n} &= 0\quad \mbox{on}\quad\Gamma
\end{align*}
\end{example}
We remark that the solution can be explicitly computed on the non-rotated strips $\tilde{I}_1:= [-1.5,-0.5]\times[-2.5,2.5]$, $\tilde{I}_2 := [0.5,1.5]\times[-2.5,2.5]$ with the ansatz $\tilde{\varphi}_1(x) = c_1\exp(\frac{x}{\epsilon}) + c_2\exp(\frac{-x}{\epsilon})-1$ and the boundary conditions $\tilde{\varphi}_1(-1.5,y)=0$, $\tilde{\varphi}_1(-0.5,y)=0$  on $\tilde{I}_1$ and with the ansatz $\tilde{\varphi}_2(x) = c_3\exp(\frac{x}{\epsilon}) + c_4\exp(\frac{-x}{\epsilon})-1$ and the boundary conditions $\tilde{\varphi}_2(0.5,y)=0$, $\tilde{\varphi}_2(1.5,y)=0$ on $\tilde{I}_2$. Further we define $\tilde{\varphi}_3= 0$ on the rest of the domain $\Omega\backslash\tilde{I}_1\cup \tilde{I}_2$.
Thus, by defining 
\begin{align*}
\tilde{\varphi}:=\left\{\begin{array}{cc} 
\tilde{\varphi}_1 & \mbox{in} \quad \tilde{I}_1 \\ 
 \tilde{\varphi}_2 & \mbox{in} \quad \tilde{I}_2\\
 \tilde{\varphi}_3 & \mbox{in} \quad \Omega\backslash\left(\tilde{I}_1\cap\tilde{I}_2\right)
\end{array}\right.
\end{align*}
the solution of Example \ref{Example1} is given by 
\begin{align*}
\varphi = \tilde{\varphi}\circ\boldsymbol{R}^{-1} \quad\mbox{in}\quad\Omega.
\end{align*}
\begin{figure}
\centering
\subfloat[{\it solution}]{\includegraphics[width=0.7\textwidth]{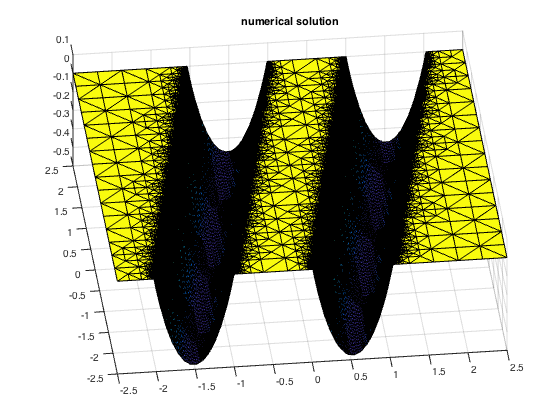}}\\
\subfloat[{\it adaptively refined mesh steered by the presented new estimator $\eta$}]{\includegraphics[width=0.47\textwidth]{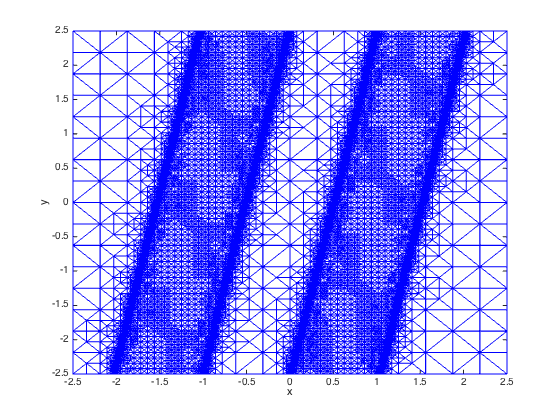}}\hspace{1em}
\subfloat[{\it adaptively refined mesh steered by the standard estimator $\eta^{std}$}]{\includegraphics[width=0.47\textwidth]{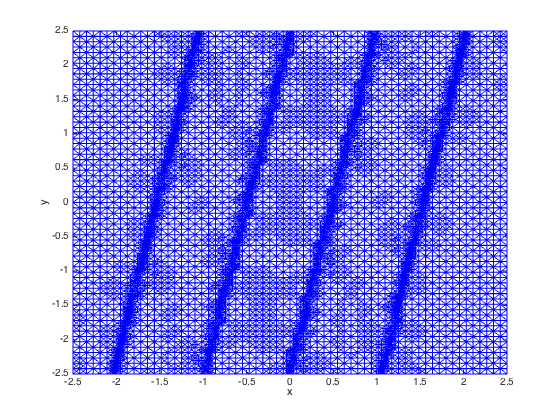}}
\caption{Example 1 with $\epsilon=0.4$}
\label{fig:deformation_bdLayer}
\end{figure}
\begin{figure}
\begin{center}
\subfloat[{\it convergence of error for $\eta^{std}$ and $\eta$ ($\epsilon = 0.08$)}]{\includegraphics[width=0.5\textwidth]{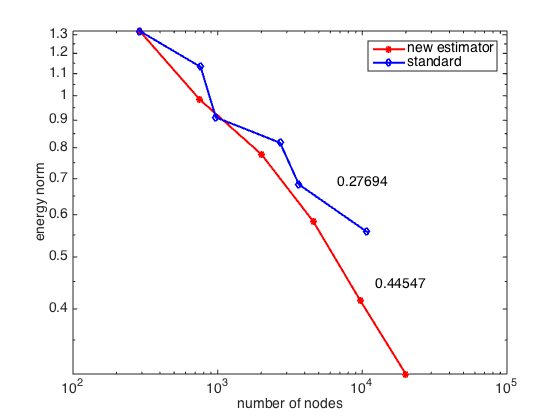}}\\
\subfloat[{\it efficiency index for $\eta^{nr}$ (continuous lines) and $\eta$ (dashed lines)}]{\includegraphics[width=0.45\textwidth]{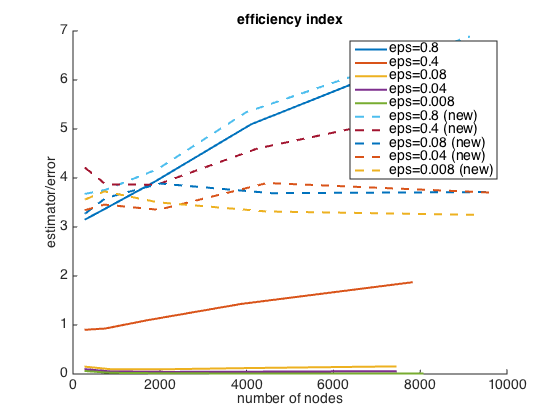}}\hspace{1em}
\subfloat[{\it efficiency index for $\eta^{nr}$ and $\eta$ in last refinement step}]{\includegraphics[width=0.45\textwidth]{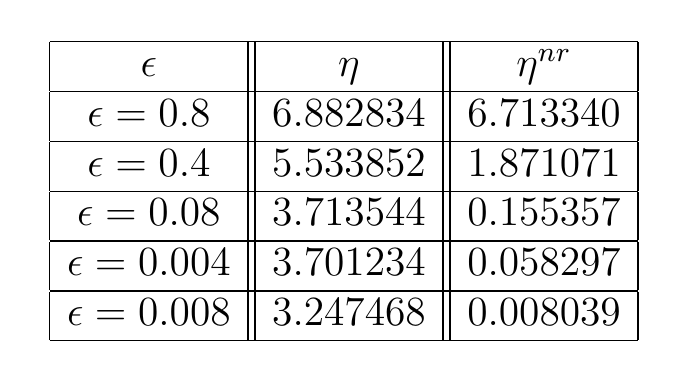}}
\end{center}
\caption{Example 1: Experimental order of convergence and efficiency index}
\label{fig:efficiency_bdLayer}
\end{figure}

In Figure \ref{fig:deformation_bdLayer}(a) the solution for $\epsilon=0.4$ is plotted on the adaptively refined grid. In the following two figures we plotted the adaptively refined mesh steered by the presented new estimator $\eta$ (\ref{Def_Estimator}) in Figure \ref{fig:deformation_bdLayer}(b) and steered by the standard residual estimator (\ref{StdResEst}) for linear elliptic problems without constraints in Figure \ref{fig:deformation_bdLayer}(c). 
In Figure \ref{fig:deformation_bdLayer}(c) also the area of full-contact is well-resolved such that one can see no clear difference between the area of contact and the area where $\varphi_1$ or $\varphi_2$ is the solution. In contrast in Figure \ref{fig:deformation_bdLayer}(b) the area of full-contact where the solution $\varphi_3=0$ is fixed to the obstacle the refinement is less strong. It is obvious that the presented new estimator gives rise to a good resolution of the free boundary, the critical region between the areas of contact and no-contact, and avoids over-refinement in the area of full-contact. 

As we can compute an exact reference solution we plot the error reduction  in the energy norm $\|\varphi-\varphi_{\mathfrak{m}}\|_{\epsilon}$ for the choice of $\epsilon=0.08$ in Figure \ref{fig:efficiency_bdLayer}(a) with logarithmic scales on both axes. The experimental order of convergence is lower for the standard residual estimator  (\ref{StdResEst}) compared to the new estimator $\eta$ (\ref{Def_Estimator}).

Additionally to prove that our estimator is not only reliable and efficient we show the robustness in Figure \ref{fig:efficiency_bdLayer}(b). Therefore we plotted the efficiency index against the number of nodes for different choices of $\epsilon\leq1$. The dashed lines refer to the presented new estimator $\eta$, i.e. the efficiency index is given by $\frac{\eta}{\|\varphi-\varphi_{\mathfrak{m}}\|_{\epsilon}}$ and the continuous lines refer to the estimator $\eta^{nr}$, defined at the end of Section \ref{Subsec:EstimatorMainRes}, i.e. the efficiency index is given by $\frac{\eta^{nr}}{\|\varphi-\varphi_{\mathfrak{m}}\|_{1}}$. The efficiency index of the estimator $\eta$ stays in the same range for different choices of $\epsilon$ while it decreases significantly for $\eta^{nr}$. That the efficiency index decreases with the order of $\epsilon$ for the estimator $\eta^{nr}$ can be seen clearer in the tabular in Figure \ref{fig:efficiency_bdLayer}(c) where the efficiency index is listed for the last refinement step for both estimators and different choices of $\epsilon$.

\begin{figure}
\centering
\subfloat[{\it solution}]{\includegraphics[width=0.7\textwidth]{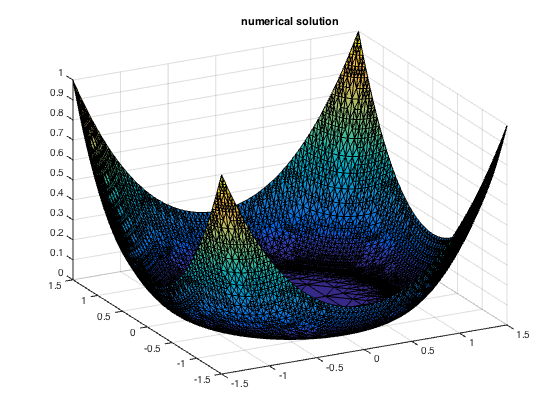}}\\
\subfloat[{\it adaptively refined mesh steered by the presented new estimator $\eta$}]{\includegraphics[width=0.47\textwidth]{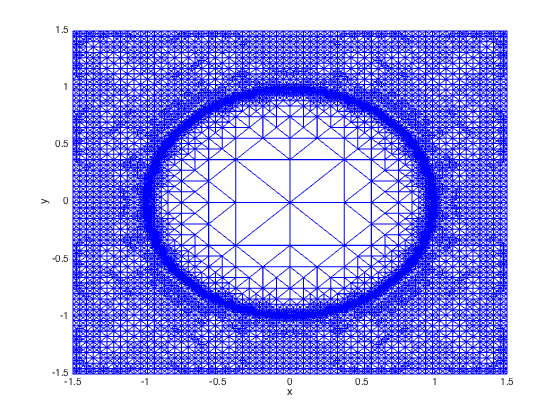}}\hspace{1em}
\subfloat[{\it adaptively refined mesh steered by the standard estimator $\eta^{std}$}]{\includegraphics[width=0.47\textwidth]{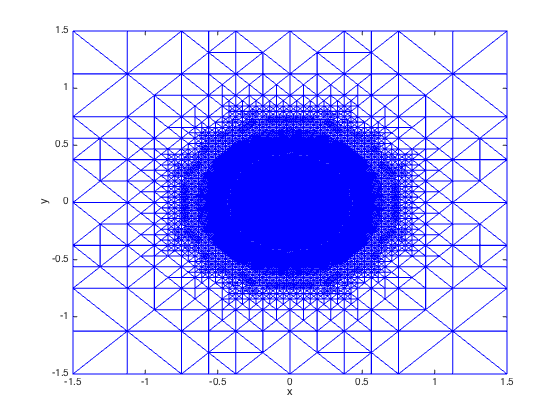}}
\caption{Example 2 with $\epsilon=0.01$}
\label{fig:deformation_smoothEx}
\end{figure}
\begin{figure}
\subfloat[{\it efficiency index for $\eta^{nr}$ (continuous lines) and $\eta$ (dashed lines)}]{\includegraphics[width=0.45\textwidth]{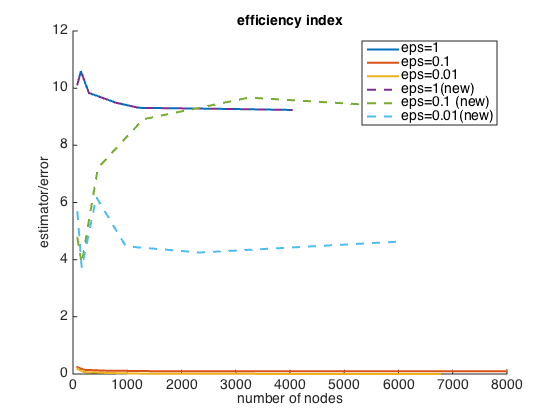}}\hspace{1em}
\subfloat[{\it efficiency index for $\eta^{nr}$ and $\eta$ in last refinement step}]{\includegraphics[width=0.45\textwidth]{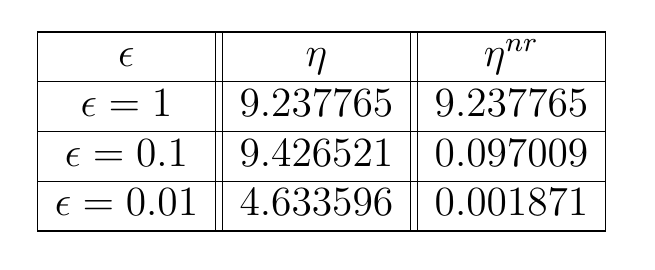}}
\caption{Example 2: Efficiency index}
\label{fig:efficiency_smoothEx}
\end{figure}

To show that the estimator enables a good resolution of the free boundary even though there is no boundary layer enforced by the constraints as in Example 1, we give a second example which has a smooth solution $\varphi$. Therefore, we adapt the example 5.1 of \cite{Bartels_Carstensen_2004}. Let $\Omega = [-1,1]\times[-1,1]$ and the radius $r:=\left|\left(\begin{matrix}x\\y\end{matrix}\right)\right|_2$ where $|\cdot|_2$ is the Euclidean norm. 
\begin{example}[Smooth example]\label{Example2}
\begin{align*}
-\epsilon^2 \Delta \varphi + \varphi &\ge f\quad \mbox{in}\quad\Omega\\
 \varphi&\ge 0 \quad \mbox{in}\quad\Omega\\
 (\varphi)(-\epsilon^2\Delta \varphi+  \varphi -f ) &= 0\quad \mbox{in}\quad \Omega\\
\varphi_D &=  \frac{r^2}{2}-\mathrm{ln}(r)-\frac{1}{2}\quad \mbox{on}\quad\Gamma
\end{align*}
with 
\begin{align*}
f = \left\{\begin{array}{cc} 
-2\epsilon^2  + \frac{r^2}{2}-\mathrm{ln}(r)-\frac{1}{2}\quad &\mbox{in} \quad r\ge 1\\
-2\epsilon^2  + \frac{1}{2}\left(r^2-1\right)\quad &\mbox{in} \quad r\leq 1
\end{array}\right. .
\end{align*}
\end{example} 
The force $f$ is constructed in such a way that penetration has to be avoided by enforcing the constraints. 

In Figure \ref{fig:deformation_smoothEx} we show  the solution on the adaptively refined grid. In the following two pictures we show the adaptively refined grid steered by the presented new estimator $\eta$  (\ref{Def_Estimator}) in Figure \ref{fig:deformation_smoothEx}(b) and steered by the standard estimator $\eta^{std}$ (\ref{StdResEst}) in \ref{fig:deformation_smoothEx}(c). Even though the solution is smooth at the transition zone, the free boundary is well resolved while the area of full-contact is not over-refined in Figure \ref{fig:deformation_smoothEx}(b). In contrast in Figure \ref{fig:deformation_smoothEx}(c) the strongest refinement has been taken place in the area of full-contact where $\varphi_{\mathfrak{m}}=g_{\mathfrak{m}}$. Further, we can see the efficiency index for $\eta$ and $\eta^{nr}$ in Figure \ref{fig:efficiency_smoothEx}(a) and again for the last refinement step in the tabular in Figure  \ref{fig:efficiency_smoothEx}(b). The efficiency index stays in the same range for the presented new estimator $\eta$.

\section*{Acknowledgments}
The author would like to thank Professor Winnifried Wollner for fruitful discussions on the topic of this work.

This work was funded by the Deutsche Forschungsgemeinschaft
  (DFG, German Research Foundation) – Projektnummer 392587580.


\begin{thebibliography}{00}
%
\bibitem{Bartels_2015}
Bartels, S.: Numerical Methods for Nonlinear Partial Differential Equations.
Springer Ser. Comput. Math. 47, Springer, Cham  (2015)
%
\bibitem{Bartels_Carstensen_2004}
Bartels, S., Carstensen, C.: Averaging techniques yield reliable a posteriori finite element error control for obstacle problems. Numer. Math. 99, 225--249 (2004)
%
\bibitem{Bartels_Schreier_2012}
Bartels, S., Schreier, P.: Local coarsening of simplicial finite element meshes generated by bisections. BIT 52, 559--569 (2012)
%
\bibitem{Fierro_Veeser_2003}
Fierro, F., Veeser, A.: A posteriori error estimators for regularized total variation of characteristic functions.
SIAM J. Numer. Anal. 41, 2032--2055 (2003)
%
\bibitem{Funken_Praetorius_Wissgott_2011}
Funken, S., Praetorius, D., Wissgott, P.: Efficient implementation of adaptive P1-FEM Matlab.
Comput. Methods Appl. Math. 11, 460-490 (2011)
%
\bibitem{Krause_Veeser_Walloth_2015}
Krause, R., Veeser, A., Walloth, M.:
An efficient and reliable residual-type a posteriori error estimator for the Signorini problem.
Numer. Math.  130, 151-197 (2015)
%
\bibitem{Miehe_Hofacker_Welschinger_2010}
Miehe, C., Welschinger, F., Hofacker, M.:
Thermodynamically consistent phase-field models of fracture: Variational principles and multi-field FE implementations. 
Internat. J. Numer. Methods Engrg., 83, 1273--1311 (2010)
%
\bibitem{Moon_Nochetto_Petersdorff_Zhang_2007}
Moon, K., Nochetto, R.,  von~Petersdorff, T., Zhang, C.:
A posteriori error analysis for parabolic variational inequalities.
M2AN Math. Model. Numer. Anal. 41, 485--511 (2007)
%
\bibitem{Nochetto_Siebert_Veeser_2005}
Nochetto, R., Siebert, K. and Veeser, A.:
Fully localized a posteriori error estimators and barrier sets for contact problems.
SIAM J. Numer. Anal. 42, 2118--2135 (2005)
%
\bibitem{Veeser_2001}
Veeser, A.: Efficient and reliable a posteriori error estimators for elliptic obstacle problems.
SIAM J. Numer. Anal. 39, 146--167 (2001)
%
\bibitem{Verfuerth_1998b}
Verf\"{u}rth, R.: Robust a posteriori error estimators for a singularly perturbed reaction-diffusion equation.
Numer. Math. 78, 479--493 (1998)
%
\bibitem{Verfuerth_2013}
Verf\"{u}rth, R.: A Posteriori Error Estimation Techniques for Finite Element Methods.
Oxford University Press (2013)
%
\bibitem{Walloth_2018}
Walloth, M.: Residual-type a posteriori estimator for  a quasi-static Signorini contact problem.
IMA J. Numer. Anal. 40, 1937--1971 (2020)
%
\end{thebibliography}
\end{document}